\newtheorem{theorem}{Theorem}[section]
\newtheorem{lemma}[theorem]{Lemma}
\begin{document}


\title[Local automorphisms on finite-dimensional
Lie algebras]{Local automorphisms on finite-dimensional Lie and
Leibniz  algebras}

\author[Ayupov]{Shavkat Ayupov}
\address{ V.I.Romanovskiy Institute of Mathematics, Uzbekistan Academy of Sciences, 81, Mirzo Ulughbek street, 100170  Tashkent,   Uzbekistan}
\email{sh$_{-}$ayupov@mail.ru}

\author[Kudaybergenov]{Karimbergen Kudaybergenov}
\address{Ch. Abdirov 1, Department of Mathematics, Karakalpak State University, Nukus 230113, Uzbekistan}
\email{karim2006@mail.ru}




\date{}
\maketitle

\begin{abstract}
We prove that a linear mapping on  the algebra \(\mathfrak{sl}_n\)
of all trace zero complex matrices  is a local automorphism if and
only if it is an automorphism or an anti-automorphism.   We also
show that a linear mapping on  a simple Leibniz algebra of the
form \(\mathfrak{sl}_n\dot +\mathcal{I}\)  is a local automorphism
if and only if it is an automorphism. We  give examples of
finite-dimensional nilpotent Lie algebras \(\mathcal{L}\) with
\(\dim \mathcal{L} \geq  3\) which admit local automorphisms
which are not automorphisms.

{\it Keywords:} Simple Lie algebra \(\mathfrak{sl}_n\), simple
Leibniz algebra, automorphism, local automorphism.
\\

{\it AMS Subject Classification:} 17A36,  17B20, 17B40.

\end{abstract}

\maketitle \thispagestyle{empty}


\section{Introduction}\label{sec:intro}

In last decades a series  of papers have been devoted to study of
mappings which are close to automorphism and derivation of
associative algebras (especially of operator algebras and
C*-algebras). Namely, the problems of describing  so-called
 local automorphisms (respectively, local derivations) and 2-local
automorphisms (respectively, 2-local derivations) have been
considered. Later similar problems were extended for non
associative algebras, in particular, for the case of Lie algebras.

Linear preserver problems (LPP) represent one of the most active
research areas in matrix theory. According to the linear character
of matrix theory, preserver problems mean here the
characterizations of all linear transformations on a given linear
space of matrices that leave certain functions, subsets,
relations, etc. invariant (see for example \cite{Molnar}). In this
paper we present some applications of LPP to the study of local
automorphisms of finite dimensional Lie and Leibniz algebras.

Let $\mathcal{A}$ be an associative  algebra. Recall that a linear
mapping $\Phi$ of $\mathcal{A}$ into itself is called a local
automorphism (respectively, a local derivation) if for every $x\in
\mathcal{A}$ there exists an automorphism (respectively, a
derivation) $\Phi_x$ of $\mathcal{A},$ depending on $x,$ such that
$\Phi_x(x)=\Phi(x).$ These notions were introduced and
investigated independently by Kadison~\cite{Kadison90} and Larson
and Sourour~\cite{Larson90}. Later, in 1997,
P.~\v{S}emrl~\cite{Semrl97}  introduced the concepts of 2-local
automorphisms and 2-local derivations. A map $\Phi:\mathcal{A}
\rightarrow \mathcal{A}$ (not linear in general) is called a
2-local automorphism (respectively, a 2-local derivation) if for
every $x, y\in \mathcal{A},$ there exists an automorphism
(respectively, a derivation) $\Phi_{x,y}:\mathcal{A} \rightarrow
\mathcal{A}$ (depending on $x, y$) such that
$\Phi_{x.y}(x)=\Phi(x),$ $\Phi_{x,y}(y)=\Phi(y).$   In
\cite{Semrl97}, P.~\v{S}emrl described 2-local derivations and
2-local automorphisms on the algebra $B(H)$ of all bounded linear
operators on the infinite-dimensional separable Hilbert space $H$
by proving that
 every 2-local automorphism (respectively, 2-local derivation) on $B(H)$ is an automorphism
 (respectively, a derivation).
 A similar result for finite-dimensional case appeared later
in~\cite{Kim04}. Further, in~\cite{AK}, a new techniques was
introduced to prove the same result for an arbitrary Hilbert space
$H$ (no separability is assumed.)

Afterwards the above considerations  gave  arise to similar
questions in von Neumann algebras framework. First positive
results have been obtained in ~\cite{AKNA} and ~\cite{AA2} for
finite and semi-finite von Neumann algebras respectively, by
showing that all 2-local derivations on these algebras are
derivations. Finally, in~~\cite{AK14}, the same result was
obtained for purely infinite von Neumann algebras. This completed
the solution of the above problem for arbitrary von Neumann
algebras.


It is natural to study the corresponding analogues  of these
problems for automorphisms or derivations of non-associative
algebras.

Let $\mathcal{L}$ be a Lie algebra. A derivation (respectively, an
automorphism) $\Phi$ of  $\mathcal{L}$ is a linear (respectively,
an invertible linear) map $\Phi:\mathcal{L} \rightarrow
\mathcal{L}$ which satisfies the condition  $\Phi([x, y])
=[\Phi(x), y] +[x, \Phi(y)]$ (respectively, $\Phi([x, y])
=[\Phi(x), \Phi(y)])$ for all $x, y\in \mathcal{L}.$ The set of
all automorphisms of a Lie algebra $\mathcal{L}$  is denoted by
$\mbox{Aut} \mathcal{L}.$

The notions of a local derivation (respectively, a local
automorphism) and a 2-local derivation (respectively, a 2-local
automorphism) for Lie algebras are defined as above, similar to
the associative case. Every derivation (respectively,
automorphism) of a Lie algebra $\mathcal{L}$ is a local derivation
(respectively, local automorphism) and a 2-local derivation
(respectively, 2-local automorphism). For a given Lie algebra
$\mathcal{L},$ the main problem concerning these notions is to
prove that they automatically become a  derivation (respectively,
an automorphism) or to give examples of local and 2-local
derivations or automorphisms of $\mathcal{L},$ which are not
derivations or automorphisms, respectively. For a
finite-dimensional semi-simple Lie algebra $\mathcal{L}$ over an
algebraically closed field of characteristic zero, the derivations
and automorphisms of $\mathcal{L}$ are completely described in
\cite{Humphreys}.

Recently in \cite{AK2016}
 we have proved  that every local derivation on semi-simple Lie algebras is
 a
derivation and gave examples of nilpotent finite-dimensional Lie
algebras with local derivations which are not derivations.

Earlier in~\cite{AKR15} the authors have proved that every 2-local
derivation on a semi-simple  Lie algebra $\mathcal{L}$ is a
derivation, and showed that each finite-dimension nilpotent Lie
algebra, with dimension larger than two,  admits a 2-local
derivation which is not a derivation.

In~\cite{Chen},  Chen and Wang initiated study of  2-local
automorphisms of  finite-dimensional Lie algebras. They prove that
if $\mathcal{L}$  is a simple Lie algebra of type
 $A_l\, (l\geq 1), D_l\, (l\geq4),$ or  $E_k\,
(k=6, 7, 8)$ over an algebraically closed field of characteristic
zero, then every 2-local automorphism of $\mathcal{L},$ is an
automorphism. Finally,
        in \cite{AK1601} Ayupov and Kudaybergenov generalized this result of \cite{Chen}
        and proved that every 2-local automorphism of a  finite-dimensional semi-simple
         Lie algebra over an algebraically closed field of characteristic zero is an
         automorphism. Moreover, they show also that every  nilpotent Lie algebra with
         finite dimension larger than two admits 2-local automorphisms which are not automorphisms.
It should be noted that similar problems for local automorphism of
finite-dimensional Lie algebras still remain open.

In the present paper we prove that a linear mapping on  the
algebra of all trace zero complex matrices \(\mathfrak{sl}_n\) is
a local automorphism if and only if it is an automorphism or an
anti-automorphism.   We also show that a linear mapping on  a
simple Leibniz algebra of the form \(\mathfrak{sl}_n\dot
+\mathcal{I}\) is a local automorphism if and only if it is an
automorphism. We also give examples of finite-dimensional
nilpotent Lie algebras \(\mathcal{L}\) with \(\dim L \geq  3\)
which admit local automorphisms  which are not automorphisms.

\section{Local automorphisms on \(\mathfrak{sl}_n\)}

In this section we study local automorphisms of the simple Lie
algebra \(\mathfrak{sl}_n\) of  all trace zero complex \(n\times
n\)-matrices.

Firstly we shall present the following two Theorems.

\begin{theorem}\label{localassoc} \cite[Theorem 2.2]{Larson90}.   \ If \(M_n(\mathbb{C})\)
is the algebra of all complex \(n\times n\)-matrices, then
\(\Delta : M_n(\mathbb{C})\to  M_n(\mathbb{C})\) is a local
automorphism iff  is an automorphism or an anti-automorphism,
i.e., either  \(\Delta(x) = axa^{-1}\)  or \(\Delta(x) =
ax^ta^{-1}\) for a fixed \(a\)  and for all \(x \in
M_n(\mathbb{C}).\)
\end{theorem}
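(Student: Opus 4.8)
The plan is to prove both implications, the forward one being routine and essentially all the content lying in the converse. For the easy direction, a global automorphism of \(M_n(\mathbb{C})\) is trivially a local automorphism; and for an anti-automorphism \(\Delta(x)=ax^ta^{-1}\) I would use that every matrix is similar to its transpose: given \(x\), choose \(s_x\) with \(x^t=s_x x s_x^{-1}\), so that \(\Delta(x)=(as_x)x(as_x)^{-1}\) is the value at \(x\) of the inner automorphism \(\mathrm{Ad}(as_x)\). Hence \(\Delta\) is a local automorphism even though it is not an automorphism.

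For the hard direction, the first step is to exploit the Skolem--Noether theorem: every automorphism of \(M_n(\mathbb{C})\) is inner, so for each \(x\) there is \(a_x\in GL_n(\mathbb{C})\) with \(\Delta(x)=a_x x a_x^{-1}\). In particular \(\Delta(x)\) is \emph{similar} to \(x\) for every \(x\), and I would read off the invariants this forces on the linear map \(\Delta\): it preserves rank (similar matrices have equal rank), it is injective and hence bijective (since \(\Delta(x)=0\) forces \(x\sim 0\), i.e. \(x=0\)), and it fixes the identity, \(\Delta(I)=I\), the only matrix similar to \(I\) being \(I\). The structural consequence I want is that \(\Delta\) restricts to a bijection of the set of rank-one matrices onto itself.

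The second step is to globalize. Here I would invoke the classical classification of bijective linear rank-one preservers on \(M_n(\mathbb{C})\) (of Marcus--Moyls type), which asserts that such a map has one of the two rigid forms \(x\mapsto PxQ\) or \(x\mapsto Px^tQ\) for fixed invertible \(P,Q\). This is exactly the passage from the point-dependent conjugators \(a_x\) to a single pair of matrices, up to a transpose. I would then pin down \(P,Q\) using \(\Delta(I)=I\): in the first case \(PQ=\Delta(I)=I\) gives \(Q=P^{-1}\) and \(\Delta(x)=PxP^{-1}\), an automorphism, while in the second case likewise \(Q=P^{-1}\) and \(\Delta(x)=Px^tP^{-1}\), an anti-automorphism; both manifestly satisfy \(\Delta(x)\sim x\), which serves as a consistency check.

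I expect the main obstacle to be precisely this globalization step. The hypothesis supplies only a separate conjugator \(a_x\) for each individual \(x\), and the entire difficulty is to show that these can be replaced by a single similarity, possibly after one global transpose; pointwise similarity alone carries no a priori coherence across different \(x\). The decisive move that makes the obstacle tractable is extracting rank-one preservation (and bijectivity) from the pointwise similarity, since this is what feeds the linear-preserver machinery of projective-geometry flavor; carefully establishing that preservation is the technical heart of the argument, after which evaluation at \(I\) finishes the proof cleanly.
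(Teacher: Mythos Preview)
The paper does not give its own proof of this theorem: it is quoted verbatim from Larson--Sourour with a citation and then used as a black box in the proof of Theorem~\ref{simplelie}. So there is no argument in the paper to compare your proposal against.

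That said, your sketch is a correct and standard route to the result. The forward direction is fine; the observation that every matrix is similar to its transpose is exactly what makes $x\mapsto ax^ta^{-1}$ a local automorphism. For the converse, the chain ``pointwise similarity $\Rightarrow$ rank preservation and injectivity $\Rightarrow$ bijective linear rank-one preserver $\Rightarrow$ Marcus--Moyls normal form $x\mapsto PxQ$ or $x\mapsto Px^tQ$'' is valid, and evaluating at $I$ to force $Q=P^{-1}$ is the clean way to finish. One small point: you do not actually need that $\Delta$ maps the set of rank-one matrices \emph{onto} itself for the Marcus--Moyls step; it suffices that $\Delta$ is linear, injective, and sends rank-one matrices to rank-one matrices, after which invertibility of $P,Q$ follows from bijectivity of $\Delta$ on $M_n(\mathbb{C})$.
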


Recall that here \(x^t\) denotes the transpose of the matrix
\(x.\)

We say that a matrix \(x\) is square-zero if \(x^2 = 0.\) A
mapping \(\Phi:\mathfrak{sl}_n\to \mathfrak{sl}_n\) preserves
square-zero matrices if \(x\in \mathfrak{sl}_n\) and \(x^2=0\)
imply \(\Phi(x)^2=0.\)

It is clear that the linear mappings defined as follows
\begin{eqnarray}\label{zero01}
\Phi(x)=\lambda axa^{-1},\, x\in \mathfrak{sl}_n
\end{eqnarray}
and
\begin{eqnarray}\label{zero02}
\Phi(x)=\lambda ax^t a^{-1},\, x\in \mathfrak{sl}_n.
\end{eqnarray}
preserve   square-zero matrices.

The following result about linear mappings preserving square-zero
due to \linebreak P.~\v{S}emrl~(see \cite{Semrl93}).

\begin{theorem} (see \cite[Corollary 2]{Semrl93}).\label{square-zero} Assume that
\(\Phi:\mathfrak{sl}_n \to \mathfrak{sl}_n\) is  a bijective
linear mapping preserving square-zero matrices. Then  \(\Phi\) is
either of the form~(1)  or of the form~(2).
\end{theorem}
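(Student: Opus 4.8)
Since this statement is quoted from \cite{Semrl93}, I only describe the route I would take rather than reproduce the argument. The plan is to reduce everything to the combinatorial geometry of the \emph{rank-one} square-zero matrices and then to invoke a fundamental-theorem-of-geometry-of-matrices type result. The rank-one square-zero matrices are exactly those of the form $N=uv^{t}$ with $u,v\in\mathbb{C}^{n}\setminus\{0\}$ and $v^{t}u=0$; projectively they correspond to incident point-hyperplane pairs (flags), the point being $[u]$ and the hyperplane being $\ker v^{t}$. A first routine observation is that these matrices are automatically traceless, that they span $\mathfrak{sl}_{n}$ (for instance $E_{ij}=e_{i}e_{j}^{t}$ with $i\neq j$, and $E_{ii}-E_{jj}=(e_{i}+e_{j})(e_{i}-e_{j})^{t}+E_{ij}-E_{ji}$ are all such combinations), and that $\Phi^{-1}$ again preserves square-zero matrices, so that $\Phi$ restricts to a bijection of the variety of square-zero matrices.

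The crucial structural step is to pin down the adjacency relation on rank-one square-zero matrices in a way visibly preserved by $\Phi$. For $N_{1}=u_{1}v_{1}^{t}$ and $N_{2}=u_{2}v_{2}^{t}$ one computes $(N_{1}+N_{2})^{2}=(v_{1}^{t}u_{2})\,u_{1}v_{2}^{t}+(v_{2}^{t}u_{1})\,u_{2}v_{1}^{t}$, so whenever the points and the hyperplanes are pairwise distinct the sum $N_{1}+N_{2}$ is square-zero precisely when $v_{1}^{t}u_{2}=v_{2}^{t}u_{1}=0$, i.e.\ precisely when the two flags are mutually incident. Because $\Phi$ is linear and square-zero preserving, it respects the relation ``$N_{1}+N_{2}$ is again square-zero''; hence, once rank is controlled, the induced map on flags preserves mutual incidence.

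With incidence preservation in hand I would invoke the fundamental theorem of projective geometry in its flag version (of Hua/Chow type): a bijection of the incidence geometry of point-hyperplane flags is induced by a semilinear automorphism $A$ of $\mathbb{C}^{n}$, possibly composed with the duality interchanging points and hyperplanes. Translating back through $(Au)(A^{-t}v)^{t}=A(uv^{t})A^{-1}$, this forces $\Phi(uv^{t})=\mu(u,v)\,A(uv^{t})A^{-1}$ in the non-dual case and the analogous transposed expression $\mu(u,v)\,A(uv^{t})^{t}A^{-1}$ in the dual case. Finally, $\mathbb{C}$-linearity of $\Phi$ is used twice: it forces the accompanying field automorphism to be the identity (so $A$ is genuinely linear), and, by consistency of $\Phi$ on sums of rank-one matrices, it forces the scalar function $\mu(u,v)$ to collapse to a single constant $\lambda$. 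Setting $a=A$ then gives $\Phi(x)=\lambda a x a^{-1}$, respectively $\Phi(x)=\lambda a x^{t}a^{-1}$, on the spanning set of rank-one square-zero matrices, and by linearity this extends to all of $\mathfrak{sl}_{n}$, yielding form~(1) or form~(2).

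The main obstacle is the rank-one detection underlying the second and third steps: one must show that the bijective linear $\Phi$ cannot enlarge the rank of a rank-one square-zero matrix, so that rank-one atoms go to rank-one atoms and the incidence argument applies. The natural way to do this is to characterize the rank-one square-zero matrices \emph{intrinsically}, using only the square-zero property and the linear structure, for instance through extremal properties of the pencils attached to a square-zero $N$: since $(N+tM)^{2}=t(NM+MN)$ when $N$ and $M$ are square-zero, the matrices $M$ with $N+tM$ square-zero for all $t\in\mathbb{C}$ are exactly those anticommuting with $N$, and this ``anticommuting square-zero'' data is manifestly $\Phi$-invariant. Establishing that the rank-one matrices are precisely those singled out by the appropriate extremal behaviour of these pencils is the technical heart of the proof, and I expect it to require the small-$n$ cases (notably $n=2$) to be checked separately.
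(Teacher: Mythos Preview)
The paper does not prove this statement at all: it is quoted verbatim as \cite[Corollary~2]{Semrl93} and used as a black box in the proof of Theorem~\ref{simplelie}. There is therefore no ``paper's own proof'' to compare your attempt against. Your acknowledgement of this at the outset is appropriate, and the outline you give --- reduce to rank-one square-zero matrices, recognise them as incident point--hyperplane flags, show that the linear/square-zero data detects mutual incidence, then invoke a fundamental-theorem-of-projective-geometry argument and collapse the scalar --- is indeed broadly the strategy \v{S}emrl follows in \cite{Semrl93}.

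Two cautions about your sketch, since you asked for a route rather than a proof. First, the assertion that $\Phi^{-1}$ also preserves square-zero matrices is not a ``routine observation'': the hypothesis is one-sided, and bijectivity alone does not give the reverse implication. In \v{S}emrl's argument this is handled only after the structure of $\Phi$ has been partially determined, not at the outset; you should not assume it up front. Second, the rank-one detection step you flag as ``the main obstacle'' is exactly where the work lies, and your proposed intrinsic characterisation via anticommuting pencils is plausible but not yet an argument --- you would need to exhibit the precise extremal property that singles out rank one and verify it carefully (and, as you note, treat small $n$ separately). Absent that, the sketch is a credible plan but not a proof.
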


It is well-known \cite{Jacob} that for any automorphism \(\Phi\)
on \(\mathfrak{sl}_n\) there exists an invertible matrix \(a\in
M_n(\mathbb{C})\) such that \(\Phi(x)=axa^{-1}\) or
\(\Phi(x)=-ax^t a^{-1}\) for all \(x\in \mathfrak{sl}_n.\)

\begin{theorem}\label{simplelie}
A linear mapping  \(\Delta: \mathfrak{sl}_n \to \mathfrak{sl}_n\)
is a local automorphism if and only if \(\Delta\) is either an
automorphism or an anti-automorphism, i.e., it has either the form
\begin{itemize}
\item[(i)] \(\Delta(x)=\pm axa^{-1},\, x\in \mathfrak{sl}_n\)

 or

\item[(ii)] \(\Delta(x)=\pm ax^t a^{-1},\, x\in \mathfrak{sl}_n.\)
\end{itemize}
\end{theorem}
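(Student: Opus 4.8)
The plan is to prove both implications, treating the elementary direction first and then combining the structure theory recalled above with Theorem~\ref{square-zero} for the substantive direction.

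For the ``if'' direction, I would verify that each of the four listed maps is a local automorphism. The maps $x \mapsto axa^{-1}$ and $x \mapsto -ax^t a^{-1}$ are genuine automorphisms of $\mathfrak{sl}_n$ by the Jacobson description recalled before the statement, hence local automorphisms. For the two remaining (anti-automorphism) forms, the key classical fact is that every square matrix is similar to its transpose, so for each fixed $x$ there is an invertible $b = b_x$ with $x^t = b x b^{-1}$. Given $\Delta(x) = ax^t a^{-1}$, I would then write $\Delta(x) = (ab)x(ab)^{-1}$, which is the value at $x$ of the automorphism $y \mapsto (ab)y(ab)^{-1}$; similarly, given $\Delta(x) = -axa^{-1}$, setting $c = ab^{-1}$ yields $\Delta(x) = -c x^t c^{-1}$, the value at $x$ of the graph automorphism $y \mapsto -cy^tc^{-1}$. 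This shows all four maps are local automorphisms.

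For the ``only if'' direction, suppose $\Delta$ is a local automorphism. The crucial first observation is that $\Delta$ preserves square-zero matrices. Indeed, if $x^2 = 0$ then $x \in \mathfrak{sl}_n$ automatically (it is nilpotent, hence of trace zero), and choosing an automorphism $\Phi_x$ with $\Phi_x(x) = \Delta(x)$, the explicit forms $\Phi_x(y) = cyc^{-1}$ and $\Phi_x(y) = -cy^tc^{-1}$ each give $\Delta(x)^2 = \Phi_x(x)^2 = 0$. Next, $\Delta$ is injective: if $\Delta(x) = 0$, then $\Phi_x(x) = 0$ for the invertible map $\Phi_x$, forcing $x = 0$; by finite-dimensionality $\Delta$ is then bijective. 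Thus $\Delta$ is a bijective linear map on $\mathfrak{sl}_n$ preserving square-zero matrices, so Theorem~\ref{square-zero} applies and gives $\Delta(x) = \lambda a x a^{-1}$ for all $x$, or $\Delta(x) = \lambda a x^t a^{-1}$ for all $x$, for some scalar $\lambda$ and invertible $a$.

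It remains to show $\lambda = \pm 1$, which is where I would again exploit that $\Delta$ is local. Since each $\Phi_x$ has one of the two Jacobson forms, the spectrum of $\Delta(x) = \Phi_x(x)$ coincides with that of $x$ or with that of $-x$. Evaluating the global formula at a diagonal trace-zero matrix $h$ whose eigenvalue multiset is not invariant under negation (for instance $\mathrm{diag}(1,-1)$ when $n=2$, or $\mathrm{diag}(3,-1,-2)$ when $n=3$, with analogous choices for larger $n$), the spectrum of $\Delta(h) = \lambda a h a^{-1}$ equals $\lambda\cdot\mathrm{spec}(h)$, and matching this multiset against $\mathrm{spec}(h)$ or $\mathrm{spec}(-h)$ forces $\lambda \in \{1,-1\}$. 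Substituting $\lambda = \pm 1$ into the two cases produces exactly the forms (i) and (ii). I expect the main obstacle to be the clean derivation of square-zero preservation together with bijectivity --- the hinge that converts the purely pointwise local-automorphism hypothesis into the global hypothesis of Theorem~\ref{square-zero} --- and the care needed to exclude scalars $\lambda \neq \pm 1$, rather than any single long computation.
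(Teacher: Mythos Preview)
Your proposal is correct and follows essentially the same route as the paper: establish bijectivity and square-zero preservation to invoke \v{S}emrl's Theorem~\ref{square-zero}, then compare spectra (the paper does this via the characteristic polynomial) at a chosen diagonal matrix to force $\lambda=\pm 1$, and for the converse use that every matrix is similar to its transpose (the paper extracts this from Theorem~\ref{localassoc}). One harmless slip: your $n=2$ example $\mathrm{diag}(1,-1)$ actually \emph{does} have spectrum invariant under negation, but the non-invariance condition you state is not needed---the multiset equation $\{\lambda,-\lambda\}=\{1,-1\}$ already forces $\lambda=\pm 1$, and indeed the paper's own choice $\mathrm{diag}(1,-1,0,\dots,0)$ has the same symmetry.
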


\begin{proof} Let \(\Delta\) be a local automorphism on \(\mathfrak{sl}_n.\)
Let us show that \(\Delta\) is a bijective  linear map preserving
square-zero matrices.

Let \(x\in \mathfrak{sl}_n\) be a non zero element. By the
definition there exists an automorphism \(\Phi^x\) on
\(\mathfrak{sl}_n\) such that \(\Delta(x)=\Phi^x(x).\) Since
\(\Phi^x\)  is an automorphism, it follows that \(\Phi^x(x)\neq
0.\) Thus \(\Delta(x)\neq 0,\) and therefore the kernel of
\(\Delta\) is trivial. Hence \(\Delta\) is bijective.

Let now \(x\in \mathfrak{sl}_n\) be a square-zero matrix, that is
\(x^2=0.\)  By the definition of local automorphism there exists
an invertible matrix  \(a_x \in M_n(\mathbb{C})\) such that
\(\Delta(x)=a_x x a_x^{-1}\) or
 \(\Delta(x)= - a_x x^t a_x^{-1}.\)
We have
\begin{eqnarray*}
\Delta(x)^2 & = &  \left(a_x x a_x^{-1}\right)^2= a_x x^2
a_x^{-1}=0
\end{eqnarray*}
or
\begin{eqnarray*}
\Delta(x)^2 & = &  \left(-a_x x^t a_x^{-1}\right)^2 = a_x (x^2)^t
a_x^{-1}=0.
\end{eqnarray*}
So, \(\Delta\) is a bijective linear map preserving square-zero
matrices. By Theorem~\ref{square-zero}  there exist an invertible
matrix \(a\in M_n(\mathbb{C})\) and a nonzero scalar \(\lambda\)
such that
\begin{eqnarray}\label{cpone}
\Delta(x)=\lambda axa^{-1} \,\,\, or \,\,\, \Delta(x)=\lambda ax^t
a^{-1}
\end{eqnarray}
for all \(x\in \mathfrak{sl}_n.\)

Take the diagonal matrix  \(y=\left(%
\begin{array}{ccccc}
  1 & 0 & 0 & \cdots & 0 \\
  0 & -1  & 0 & \cdots  & 0 \\
  0 & 0 & 0 & \cdots  & 0 \\
  \cdot & \cdot & \cdot & \cdots  & \cdot \\
  0 & 0 & 0 & \cdots  & 0 \\
\end{array}%
\right)\) in \(\mathfrak{sl}_n.\)  By the definition of local
automorphism there exists an invertible matrix \(a_y \in
M_n(\mathbb{C})\) such that
\begin{eqnarray}\label{cptwo}
\Delta(y)=a_y y a_y^{-1} \,\,\, or \,\,\, \Delta(y)=-a_y y^t
a_y^{-1}.
\end{eqnarray}

Let \(p_x(t)=\det(t \mathbf{1} - x)\) be  the characteristic
polynomial of the matrix \(x,\) where \(\mathbf{1}\) is the unit
matrix in \(M_n(\mathbb{C}).\)

We shall find the  characteristic polynomial of the matrix
\(\Delta(y),\) using the equalities \eqref{cpone} and
\eqref{cptwo}. Taking into account that \(y=y^t\) and comparing
the equalities \eqref{cpone} and \eqref{cptwo}, we obtain that
\(\Delta(y)=\lambda aya^{-1}=\pm a_y y a_y^{-1}.\) Then
\begin{eqnarray*}
p_{\Delta(y)}(t)  & = & p_{\lambda aya^{-1}}(t) =  \det
(t\mathbf{1}-\lambda aya^{-1})
=\det (a(t\mathbf{1}-\lambda y)a^{-1}) =\\
& =& \det (t\mathbf{1}-\lambda y)=(t-\lambda)(t+\lambda)t^{n-2}
\end{eqnarray*}
and
\begin{eqnarray*}
p_{\Delta(y)}(t)  & = & p_{\pm a_y ya_y^{-1}}(t) =  \det
(t\mathbf{1}\mp a_yya_y^{-1})
=\det (a_y(t\mathbf{1}\mp y)a_y^{-1}) =\\
& =& \det (t\mathbf{1}\mp  y)=(t+1)(t-1)t^{n-2}
\end{eqnarray*}
Thus \((t-\lambda)(t+\lambda)t^{n-2}=(t-1)(t+1)t^{n-2},\) and
therefore \(\lambda=\pm 1.\) So, \(\Delta(x)=\pm axa^{-1}\) or
\(\Delta(x)=\pm ax^t a^{-1}\) for all \(x\in \mathfrak{sl}_n.\)

Now we shall show that every anti-automorphism on
\(\mathfrak{sl}_n\) is a local automorphism. Let us first show
that the mappings defined by
\begin{eqnarray}\label{trans}
\Delta(x)=x^t,\, x\in \mathfrak{sl}_n
\end{eqnarray}
and
\begin{eqnarray}\label{minus}
\Delta(x)=-x,\, x\in \mathfrak{sl}_n
\end{eqnarray}
are local automorphisms.

Let \(x\in \mathfrak{sl}_n\) be an arbitrary matrix. By
Theorem~\ref{localassoc}, the mapping on \(M_n(\mathbb{C})\)
defined as \(z\to z^t\) is a local associative automorphism.
Therefore there exists an invertible matrix \(a_x\in
M_n(\mathbb{C})\) such that \(x^t=a_x x a_x^{-1}.\) Hence
\begin{center}
 \(x^t=a_x x a_x^{-1}\) and  \(-x=- a_x^t x^t (a_x^t)^{-1}.\)
\end{center}
This means that the mappings defined as (5) and (6) are local
automorphisms on~\(\mathfrak{sl}_n.\) Finally, since every
anti-automorphism on \(\mathfrak{sl}_n\) is a superposition of an
anti-automorphism of the form (5) or (6) and an inner
automorphism, it follows that every anti-automorphism on
\(\mathfrak{sl}_n\) is a local automorphism. The proof is
complete.
\end{proof}

\section{Local automorphisms of algebras \(\mathfrak{sl}_n\dot+ \mathcal{I}\)}

In this section we study local automorphisms of  simple Leibniz
algebras of the form \(\mathfrak{sl}_n\dot+  \mathcal{I}.\)

An algebra $(\mathcal{L},[\cdot,\cdot])$ over a field $\mathbb{F}$
is called a Leibniz algebra if it satisfies the property
\begin{center}
\([x,[y,z]]=[[x,y],z] - [[x,z],y]\) for all \(x,y \in
\mathcal{L},\)
\end{center}
which is called Leibniz identity.

For a Leibniz algebra $\mathcal{L}$, a subspace generated by
squares of its elements  \linebreak $\mathcal{I}=span\left\{[x,x]:
x\in \mathcal{L}\right\}$ due to Leibniz identity becomes an
ideal, and the quotient
$\mathcal{G}_\mathcal{L}=\mathcal{L}/\mathcal{I}$ is a Lie algebra
called liezation of $\mathcal{L}.$ Moreover,  $[ \mathcal{L},
\mathcal{I}]=0.$ In general, $[\mathcal{I}, \mathcal{L}]\ne 0$.
Since we are interested in Leibniz algebras which are not Lie
algebras, we will always assume that $\mathcal{I}\ne0.$

A Leibniz algebra $\mathcal{L}$ is called  \textit{simple}  if its
liezation is a simple Lie algebra and the ideal $\mathcal{I}$ is a
simple ideal. Equivalently, $\mathcal{L}$ is simple iff
$\mathcal{I}$ is the only non-trivial ideal of~$\mathcal{L}.$

Let  $\mathcal{G}$ be a Lie algebra  and  $\mathcal{V}$ a  (right)
$\mathcal{G}$-module. Endow the vector space \linebreak
$\mathcal{L}=\mathcal{G} \oplus \mathcal{V}$ with the bracket
product as follows:
$$
[(g_1, v_1), (g_2, v_2)]:=([g_1, g_2], v_1.g_2),
$$
where  $v. g$ (sometimes denoted as $[v, g]$)  is the action of an
element $g$ of $\mathcal{G}$ on $v\in \mathcal{V}.$ Then
$\mathcal{L}$ is a Leibniz algebra, denoted as $\mathcal{G}\ltimes
V.$

The following Theorem is the main result of this section.

\begin{theorem}\label{leibnizsimple}
Let \(\mathfrak{sl}_n\dot +\mathcal{I}\) be a simple Leibniz
algebra with \(\mathcal{I}\neq\{0\}.\) Then a linear mapping
\(\Delta : \mathfrak{sl}_n \dot +\mathcal{I}\to \mathfrak{sl}_n
\dot +\mathcal{I}\) is a local automorphism if and only if
\(\Delta\) is an automorphism.
\end{theorem}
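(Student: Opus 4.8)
The plan is to reduce the problem to Theorem~\ref{simplelie} by quotienting out the ideal \(\mathcal{I}\), and then to use the \(\mathfrak{sl}_n\)-module structure carried by \(\mathcal{I}\) to discard the anti-automorphism alternative. First I would record two facts about an arbitrary local automorphism \(\Delta\) on \(\mathcal{L}=\mathfrak{sl}_n\dot+\mathcal{I}\). Exactly as in the proof of Theorem~\ref{simplelie}, for \(x\neq 0\) the certifying automorphism \(\Phi_x\) satisfies \(\Delta(x)=\Phi_x(x)\neq 0\), so \(\Delta\) is injective, hence bijective. Moreover, since \(\mathcal{I}\) is the only nontrivial ideal of the simple Leibniz algebra \(\mathcal{L}\), every automorphism preserves \(\mathcal{I}\); applying this to \(\Phi_x\) with \(x\in\mathcal{I}\) gives \(\Delta(\mathcal{I})\subseteq\mathcal{I}\), and by bijectivity \(\Delta(\mathcal{I})=\mathcal{I}\). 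Thus \(\Delta\) descends to a linear bijection \(\bar\Delta\) of \(\mathcal{L}/\mathcal{I}\cong\mathfrak{sl}_n\). For \(g\in\mathfrak{sl}_n\) the automorphism \(\Phi_g\) induces a Lie automorphism \(\bar\Phi_g\) of \(\mathfrak{sl}_n\) with \(\bar\Phi_g(\bar g)=\bar\Delta(\bar g)\), so \(\bar\Delta\) is a local automorphism of \(\mathfrak{sl}_n\), and Theorem~\ref{simplelie} leaves only the four possibilities \(\bar\Delta(\bar g)=\pm bgb^{-1}\) or \(\bar\Delta(\bar g)=\pm bg^tb^{-1}\).

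Next I would translate the automorphism condition on \(\mathcal{L}\) into a statement about \(\mathcal{I}\). Writing the action as \([v,g]=v.g\) and using \([\mathcal{I},\mathcal{I}]=0\), any automorphism \(\Phi\) of \(\mathcal{L}\) restricts to a linear isomorphism \(\Phi|_{\mathcal{I}}\) of \(\mathcal{I}\) satisfying the twisted intertwining identity \(\Phi|_{\mathcal{I}}(v.g)=\Phi|_{\mathcal{I}}(v).\bar\Phi(g)\); in particular such a \(\Phi\) exists only when \(\mathcal{I}\cong\mathcal{I}^{\bar\Phi}\) as \(\mathfrak{sl}_n\)-modules, so the Lie automorphism induced on \(\mathfrak{sl}_n\) must fix the isomorphism class of \(\mathcal{I}\). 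This is precisely the structural constraint that is absent for \(\mathfrak{sl}_n\) alone and that should forbid the genuine anti-automorphisms.

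The main obstacle is to exclude the two anti-automorphism forms \(-bgb^{-1}\) and \(bg^tb^{-1}\) for \(\bar\Delta\): the bare \(\mathfrak{sl}_n\) result genuinely permits them (for example, the transpose is a local automorphism all of whose certificates are inner automorphisms), so they can only be eliminated by bringing in \(\mathcal{I}\). Here I would fix a diagonal \(h\in\mathfrak{sl}_n\) and compare the spectrum of the operator \(v\mapsto v.h\) on \(\mathcal{I}\) with that of \(v\mapsto v.\Delta(h)\) on \(\Delta(\mathcal{I})=\mathcal{I}\), using the certifying automorphisms \(\Phi_h\) and their restrictions to \(\mathcal{I}\): an inner certificate permutes these weights, whereas a transpose or negative certificate dualizes (negates) them. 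Matching the two spectra—an argument parallel to the characteristic-polynomial computation in Theorem~\ref{simplelie}, now carried out on the module \(\mathcal{I}\) rather than on the defining representation—forces \(\bar\Delta\) to be one of the genuine Lie automorphisms \(bgb^{-1}\) or \(-bg^tb^{-1}\). I expect this weight-matching step, together with upgrading the twisted intertwining identity from the individual \(\Phi_x\) to \(\Delta|_{\mathcal{I}}\) itself, to be the delicate part of the argument.

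Finally, once \(\bar\Delta\) is a genuine Lie automorphism \(\theta\) and \(\Delta|_{\mathcal{I}}\) is the corresponding \(\theta\)-twisted module isomorphism, I would assemble a single automorphism \(\Phi_0\) of \(\mathcal{L}\) from \(\theta\) and \(\Delta|_{\mathcal{I}}\), absorb the \(\mathcal{I}\)-valued part of \(\Delta\) on \(\mathfrak{sl}_n\) into an inner automorphism by an element of \(\mathcal{I}\), and verify the bracket relations on \([\mathfrak{sl}_n,\mathfrak{sl}_n]\) and \([\mathcal{I},\mathfrak{sl}_n]\) (the remaining products vanishing) to conclude that \(\Delta=\Phi_0\in\mbox{Aut}\,\mathcal{L}\). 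The converse, that every automorphism is a local automorphism, is immediate from the definition.
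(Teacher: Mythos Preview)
Your overall architecture---pass to \(\mathcal{L}/\mathcal{I}\cong\mathfrak{sl}_n\), invoke Theorem~\ref{simplelie} to list the four candidates for \(\bar\Delta\), then use the \(\mathfrak{sl}_n\)-module \(\mathcal{I}\) to discard the two anti-automorphisms---matches the paper's. The problem is with your proposed mechanism for the discard step.

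Your weight-matching idea compares the spectrum of \(v\mapsto v.h\) with that of \(v\mapsto v.\bar\Delta(h)\) for a diagonal \(h\). But if \(\bar\Delta(x)=x^{t}\) then \(\bar\Delta(h)=h\) for every diagonal \(h\), so the two operators coincide and the spectra match trivially; no information is obtained. More generally, the certifying automorphism \(\Phi_h\) is a \emph{genuine} automorphism of \(\mathcal{L}\), so its restriction to \(\mathcal{I}\) always intertwines the \(h\)-action with the \(\bar\Phi_h(h)\)-action; the spectra will match whatever \(\bar\Delta\) is. The phrase ``a transpose or negative certificate dualizes (negates) them'' conflates the global shape of \(\bar\Delta\) with the individual certificates \(\bar\Phi_h\): even when \(\bar\Delta\) is globally the transpose, each \(\bar\Phi_h\) is a Lie automorphism (necessarily inner when it fixes a strongly regular \(h\)), and it is precisely this mismatch that you need to exploit---but not via spectra at a single \(h\).

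The paper's exclusion argument (Lemmata~\ref{lm1} and~\ref{lm2}) is of a different nature: it evaluates \(\Delta\) at mixed elements \(h_0+y_\beta\) and \(e_\alpha+y_\beta\) with \(y_\beta\) a highest weight vector, computes \([\Delta(z),\Delta(z)]\) two ways, and shows that \(\Delta(y_\beta)\) is forced to vanish, contradicting injectivity. This uses non-diagonal root vectors \(e_\alpha\) in an essential way (precisely where transpose is not the identity). Your final step is also understated: ``upgrading the twisted intertwining identity from the individual \(\Phi_x\) to \(\Delta|_{\mathcal{I}}\)'' is the entire content of the paper's Lemma~\ref{lm3}, imported from \cite{AKO}, and is not a formality; likewise ``absorb the \(\mathcal{I}\)-valued part into an inner automorphism by an element of \(\mathcal{I}\)'' needs justification, since \([\mathcal{L},\mathcal{I}]=0\) means elements of \(\mathcal{I}\) do not generate inner automorphisms in the usual way.
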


First we give necessary notations concerning the  algebra
\(\mathfrak{sl}_n.\) Let \(\{e_{ij}: 1\leq i, j \leq n\}\) be the
system of matrix units in \(M_n(\mathbb{C}).\) A subalgebra
\[
\mathfrak{h}=\left\{h=\sum\limits_{i=1}^n a_i e_{ii}:
\sum\limits_{i=1}^n a_i=0\right\}
\]
is a Cartan subalgebra of \(\mathfrak{sl}_n.\) For any \(i, j\in
\{1, \ldots, n\}\) we have
\[
[h, e_{ij}] = (a_i - a_j)e_{ij}.
\]
Let \(\mathfrak{h}^\ast\) be the space of all linear functionals
on \(\mathfrak{h}.\) Denote by \(\epsilon_i,\, 1\leq i \leq n,\)
the elements of \(\mathfrak{h}^\ast\) defined by
\[
\langle \epsilon_i, h\rangle=a_i.
\]
The root system \(R\)   consists  the elements of the form
\(\epsilon_i - \epsilon_j, i \neq j,\)  and \(\mathbb{C}e_{ij}\)
are the corresponding root subspaces, moreover,
\(R^+=\left\{\alpha=\epsilon_i-\epsilon_j: i<j\right\}\) is the
set of all positive roots. Denote
\[
\alpha_i=\epsilon_i-\epsilon_{i+1},\  i=1,...,n-1.
\]
From the relation
\[
\pm(\epsilon_i-\epsilon_j)=\pm\sum\limits_{k=i}^{j-1}\alpha_k,\,\,
i<j
\]
we see that the set \(\Pi\) formed by the elements \(\alpha_i, i =
1,... , n-1,\) is a base of \(R\) (see \cite{Humphreys}).

By \cite[Lemma~2.2]{Wang}, there exists an element $h_0\in
\mathfrak{h}$ such that $\alpha(h_0) \neq \beta(h_0)$ for every
$\alpha, \beta\in R, \alpha\neq \beta.$  Such elements $h_0$ are
called \textit{strongly regular} elements of $\mathfrak{sl}_n.$
Again by \cite[Lemma~2.2]{Wang}, every strongly regular element
\(h_0\) is a \textit{regular semi-simple element}, i.e.
$$
\{x\in \mathfrak{sl}_n: [h_0, x]=0\}=\mathfrak{h}.
$$

\begin{lemma}\label{lm0}
Let $\Phi$ be an automorphism on \(\mathfrak{sl}_n\) and let
\(h_0\) be a strongly regular element in \(\mathfrak{h}\) such
that \(\Phi(h_0)=-h_0.\) Then there exists an invertible diagonal
matrix \(a \in M_n(\mathbb{C})\) such that \(\Phi(x)=- ax^t
a^{-1}.\)
\end{lemma}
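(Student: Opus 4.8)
The plan is to use the known structure theorem for automorphisms of $\mathfrak{sl}_n$ and pin down which alternative occurs and what the conjugating matrix must look like. Recall from the excerpt that every automorphism $\Phi$ of $\mathfrak{sl}_n$ has one of two forms: either $\Phi(x)=bxb^{-1}$ for some invertible $b$, or $\Phi(x)=-bx^tb^{-1}$ for some invertible $b$. So the first step is to rule out the inner form. Suppose $\Phi(x)=bxb^{-1}$. Then $\Phi(h_0)=bh_0b^{-1}$ is conjugate to $h_0$, hence has the same eigenvalues as $h_0$. But the hypothesis says $\Phi(h_0)=-h_0$, whose eigenvalues are the negatives of those of $h_0$. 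Since $h_0$ is strongly regular, its eigenvalues (the diagonal entries $a_1,\dots,a_n$, which realize the distinct values $\alpha(h_0)$) are distinct, and I must check that the multiset $\{a_i\}$ cannot coincide with $\{-a_i\}$; strong regularity forces enough genericity that these two spectra differ, giving a contradiction. Thus $\Phi$ must be of the second, anti-automorphism-type form $\Phi(x)=-bx^tb^{-1}$.

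Having fixed the form $\Phi(x)=-bx^tb^{-1}$, the next step is to extract information from $\Phi(h_0)=-h_0$. Since $h_0=h_0^t$ is diagonal, this reads $-bh_0^tb^{-1}=-bh_0b^{-1}=-h_0$, i.e. $bh_0b^{-1}=h_0$, so $b$ commutes with $h_0$. Here regular semisimplicity is the key: because $h_0$ is a regular semisimple element, its centralizer in $\mathfrak{sl}_n$ equals the Cartan subalgebra $\mathfrak{h}$, and correspondingly the centralizer of $h_0$ in $M_n(\mathbb{C})$ consists exactly of the diagonal matrices (as $h_0$ has distinct diagonal eigenvalues). Therefore $b$ is a diagonal matrix. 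Setting $a:=b$, which is invertible and diagonal, we obtain $\Phi(x)=-ax^ta^{-1}$ as desired.

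I would carry this out in the order: (1) invoke the two-form classification and assume toward contradiction the inner form, deriving the spectral contradiction from $\Phi(h_0)=-h_0$ together with strong regularity; (2) conclude $\Phi$ is of the form $\Phi(x)=-bx^tb^{-1}$; (3) use $\Phi(h_0)=-h_0$ and $h_0^t=h_0$ to show $b$ commutes with $h_0$; (4) invoke regular semisimplicity to conclude $b$ is diagonal. The main obstacle I anticipate is step (1): I must argue rigorously that a strongly regular $h_0$ cannot have a spectrum invariant under negation (up to reordering), so that $bh_0b^{-1}=-h_0$ is genuinely impossible for the inner form. The cleanest route is to observe that if $h_0$ and $-h_0$ were conjugate they would share a characteristic polynomial, forcing $\{a_i\}=\{-a_i\}$ as multisets; this symmetry is not automatically excluded by strong regularity alone, so I would either appeal to the freedom in choosing $h_0$ (any strongly regular element with $\Phi(h_0)=-h_0$ already forces the anti-type via a more intrinsic argument) or argue that under the second form the identity $\Phi(h_0)=-h_0$ is consistent while under the first it fails because an inner automorphism preserves the property of \emph{being} $h_0$-conjugate, and $-h_0$ lies in a different conjugacy class whenever $h_0\neq 0$ is regular. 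Nailing this dichotomy precisely is where the real care is needed.
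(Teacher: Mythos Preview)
Your approach is different from the paper's and is essentially sound once the obstacle you flag in step~(1) is resolved. The paper sidesteps that obstacle entirely: it composes $\Phi$ with the automorphism $\Theta(x)=-x^t$, so that $(\Theta\circ\Phi)(h_0)=h_0$; then it cites two lemmas to conclude that an automorphism fixing a strongly regular $h_0$ must fix all of $\mathfrak{h}$ pointwise and hence be conjugation by an invertible diagonal matrix $b$. Inverting $\Theta$ gives $\Phi(x)=-b^tx^t(b^t)^{-1}$. No case split on the two automorphism forms, no spectral argument.

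Your direct route via the two-form classification can be made to work, but the sentence ``$-h_0$ lies in a different conjugacy class whenever $h_0\neq 0$ is regular'' is not correct as stated: for $n=2$ the element $\mathrm{diag}(a,-a)$ is strongly regular and conjugate to its negative (and indeed for $\mathfrak{sl}_2$ every automorphism is inner, so your dichotomy collapses there and the conclusion holds trivially). For $n\geq 3$, however, strong regularity \emph{does} exclude the multiset identity $\{a_i\}=\{-a_i\}$: if the $a_i$ are distinct and this symmetry holds, then either some $a_k=0$ and a sign-pair $a_j=-a_i$ gives $a_i-a_k=a_k-a_j$, or there are two disjoint sign-pairs $(i,j),(k,l)$ giving $a_i-a_k=a_l-a_j$; either way two distinct roots take the same value on $h_0$, contradicting strong regularity. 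With that patch your steps~(2)--(4) go through cleanly (strong regularity indeed forces the diagonal entries of $h_0$ to be distinct, so the centralizer of $h_0$ in $M_n(\mathbb{C})$ is the diagonal matrices). The paper's composition trick simply buys a shorter, case-free argument.
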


\begin{proof} Let \(\Theta\) be an automorphism on
\(\mathfrak{sl}_n\) defined by \(\Theta(x)=-x^t,\, x\in
\mathfrak{sl}_n.\) Since \((\Theta\circ \Phi)(h_0)=h_0,\) by
\cite[Lemma 2.2]{AK1601}, the automorphism \(\Theta\circ \Phi\)
leaves  every element of \(\mathfrak{h}\) fixed. Further by
\cite[P. 109, Lemma 2]{ZheWan}, there exists an invertible
diagonal matrix \(b \in M_n(\mathbb{C})\) such that \((\Theta\circ
\Phi)(x)= bx b^{-1}.\) Thus \(\Phi(x)= -b^tx^t (b^t)^{-1}\) for
all \(x\in \mathfrak{sl}_n.\)  The proof is complete.
\end{proof}

Let \(\mathcal{S}\) be a simple Lie algebra. For any
$\mathcal{S}$-module $\mathcal{I}$ and any automorphism $\sigma$
of $\mathcal{S},$ we define the new $\mathcal{S}$-module structure
$\mathcal{I}^{\sigma}$ on $\mathcal{I},$ given by the action
$$
v\cdot x=[v, x]'=v\sigma(x),\,\,\forall\ v\in \mathcal{I}, x\in
\mathcal{S}.
$$
We know from \cite{Humphreys}  that $\mathcal{I}\simeq
\mathcal{I}^{\sigma}$ if $\sigma$ is an inner automorphism of
$\mathcal{S}.$ If $\sigma$ is not an inner automorphism of
$\mathcal{S}$, we generally do not have  $\mathcal{I}\simeq
\mathcal{I}^{\sigma}$.

Let \(\mathcal{L}=\mathcal{S}\dot +\mathcal{I}\) be a simple
complex Leibniz algebra. In \cite{AKOZ} it was proved that an
automorphism $\sigma$ of $\mathcal{S}$ can be extended to an
automorphism  $\varphi$ of $\mathcal{L}$ if and only if
$\mathcal{I}\simeq \mathcal{I}^{\sigma}$ as $\mathcal{S}$-modules.
In \cite{AKOZ}  we also have present an example which shows the
existence of automorphism of \(\mathcal{S}\) which can not be
extended to the whole algebra \(\mathcal{L}.\)

Let \(\Phi:\mathcal{L}\to \mathcal{L}\) be an automorphism. Then
\(\Phi\) can be represented as (see \cite{AKO})
\begin{eqnarray}\label{autoform}
\Phi=\left(%
\begin{array}{cc}
  \Phi_\mathcal{S} & 0 \\
  \Phi_{\mathcal{S}, \mathcal{I}}  & \Phi_\mathcal{I} \\
\end{array}%
\right),
\end{eqnarray}
where \(\Phi_\mathcal{S}\) is an automorphism of \(\mathcal{S},\)
\(\Phi_{\mathcal{S}, \mathcal{I}} \) is a \(\mathcal{S}\)-module
homomorphism from \(\mathcal{S}\) to
\(\mathcal{I}^{\Phi_\mathcal{S}}\) and \(\Phi_\mathcal{I}\) is a
\(\mathcal{S}\)-module isomorphism from \(\mathcal{I}\) onto
\(\mathcal{I}^{\Phi_\mathcal{S}},\) i.e.,
\[
\left[\Phi_{\mathcal{S},\mathcal{I}}(x),
\Phi_\mathcal{S}(y)\right]=
 \Phi_{\mathcal{S},\mathcal{I}}\left([x, y]\right)
 \]
and
\[
\left[\Phi_\mathcal{I}(i), \Phi_\mathcal{S}(y)\right]=
 \Phi_\mathcal{I}\left([i, y]\right)
\]
for all \(x, y\in
 \mathcal{S}\) and  \(i\in
 \mathcal{I}.\) Note that \(\Phi_{\mathcal{S}, \mathcal{I}}=\omega \theta \circ \Phi_\mathcal{S},\) (\(\omega\in \mathbb{C}\))
 where
\(\theta\) is a \(\mathcal{S}\)-module isomorphism from
\(\mathcal{S}\) onto \(\mathcal{I}^{\Phi_\mathcal{S}}\) and for
\(\dim \mathcal{S}\neq \dim \mathcal{I},\) we
 have \(\Phi_{\mathcal{S},\mathcal{I}}=0.\)

Thus every local automorphism \(\Delta\) on
$\mathcal{L}=\mathcal{S}\dot+ \mathcal{I}$ is also represented as
\begin{eqnarray}\label{autoloc}
\Delta=\left(%
\begin{array}{cc}
  \Delta_\mathcal{S} & 0 \\
  \Delta_{\mathcal{S}, \mathcal{I}}  & \Delta_\mathcal{I} \\
\end{array}%
\right),
\end{eqnarray}
where \(\Delta_\mathcal{S}\) is a local automorphism of
\(\mathcal{S}.\)

Below we shall use the following properties of local
automorphisms.  Let \(\Delta\) be a local automorphism of
$\mathcal{S}\dot+ \mathcal{I}$  and let
\(x=x_\mathcal{S}+x_\mathcal{I}\in \mathcal{S} + \mathcal{I}.\)
Take an automorphism \(\Phi^{x}\) such that \(\Delta(x)=\Phi^x
(x).\) Then (7) and (8) imply that
\(\Phi^{x}_\mathcal{S}(x_\mathcal{S})=\Delta_\mathcal{S}(x_\mathcal{S})\)
and
\(\Phi^{x}_{\mathcal{S},\mathcal{I}}(x_\mathcal{S})+\Phi^{x}_\mathcal{I}(x_\mathcal{I})=
\Delta_{\mathcal{S},\mathcal{I}}(x_\mathcal{S})+\Delta_\mathcal{I}(x_\mathcal{I}).\)

Let \(\mathcal{S}\dot +\mathcal{I}\) be a simple complex Leibniz
algebra. From the representation theory of semisimple Lie algebras
\cite{Humphreys} we have that a Cartan subalgebra $\mathcal{H}$ of
the simple Lie algebra $\mathcal{S}$ acts diagonalizable on
$\mathcal{S}$-module $\mathcal{I}:$
\[
\mathcal{I}=\bigoplus_{\beta\in \Gamma} \mathcal{I}_\beta,
\]
where
\begin{eqnarray*}
\mathcal{I}_\beta & = & \{i\in \mathcal{I}:  [i, h]=\beta(h)i,
\,\, \forall \, h\in \mathcal{H}\},\\
\Gamma & = & \{\beta \in \mathcal{H}^\ast: \mathcal{I}_\beta\neq
\{0\}\}
\end{eqnarray*}
 and $\mathcal{H}^\ast$ is the space of all linear functionals on $\mathcal{H}.$ Elements of $\Gamma$ are called
  weights of~$\mathcal{I}.$

\begin{lemma}\label{spi}
Let $\Phi$ be an automorphism on
\(\mathfrak{sl}_n\dot{+}\mathcal{I}\) and let \(h_0\) be a
strongly regular element in \(\mathfrak{h}\) such that
\(\Phi_{\mathfrak{sl}_n}(h_0)=h_0.\) Then there exists
\(\lambda_\beta\in \mathbb{C}\)  such that
\(\Phi(y_\beta)=\lambda_\beta u_\beta,\) where \(\beta\) is a
highest weight of \(\mathcal{I}.\)
\end{lemma}

\begin{proof}  Since \(\Phi_{\mathfrak{sl}_n}(h_0)=h_0,\) by
\cite[Lemma 2.2]{AK1601} for every root \(\alpha\in R\) there
exists non zero \(c_\alpha\in \mathbb{C}\) such that
\(\Phi_{\mathfrak{sl}_n}(e_\alpha)=c_\alpha e_\alpha.\) Then
\[
[\Phi(y_\beta), e_\alpha] = c^{-1}_\alpha [\Phi(y_\beta),
\Phi(e_\alpha)]= c^{-1}_\alpha \Phi ([y_\beta,
e_\alpha])=c^{-1}_\alpha \Phi (0)=0
\]
for each positive root \(\alpha.\) Since the highest weight
subspace \(\mathcal{I}_\beta\) is one dimensional, it follows that
\(\Phi(y_\beta)=\lambda_\beta y_\beta.\) The proof is complete.
\end{proof}

From now on \(\mathfrak{sl}_n \dot +\mathcal{I}\) is a simple
Leibniz algebra with \(\mathcal{I}\neq\{0\}.\)

\begin{lemma}\label{lm1}
Let  \(\Delta : \mathfrak{sl}_n \dot +\mathcal{I}\to
\mathfrak{sl}_n \dot +\mathcal{I}\) be a linear mapping such that
\(\Delta_{\mathfrak{sl}_n}(x)=x^t\) for all \(x\in
\mathfrak{sl}_n.\) Then \(\Delta\) is not a local automorphism.
\end{lemma}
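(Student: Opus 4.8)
The plan is to argue by contradiction: assume $\Delta$ is a local automorphism whose $\mathfrak{sl}_n$-block is the transpose, and track how $\Delta$ must act on the highest weight space of the module $\mathcal{I}$. Using the block form \eqref{autoloc}, the bottom-right corner $\Delta_{\mathcal{I}}$ is the object to control, and the strategy is to produce two families of test elements $x\in\mathfrak{sl}_n\dot +\mathcal{I}$: one pinning $\Delta_{\mathcal{I}}(y_\beta)$ to the line $\mathbb{C}y_\beta$, and one forcing $\Delta_{\mathcal{I}}(y_\beta)$ off that line. I would first treat the generic dimension range $\dim\mathcal{I}\neq\dim\mathfrak{sl}_n=n^2-1$, where the homomorphism block vanishes: evaluating $\Delta$ on a pure element $x\in\mathfrak{sl}_n$ and using that $\Phi_{\mathfrak{sl}_n,\mathcal{I}}=0$ for every genuine automorphism in this range forces $\Delta_{\mathfrak{sl}_n,\mathcal{I}}=0$ as well, so the off-diagonal corners can be ignored.

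First I would fix a strongly regular $h_0\in\mathfrak{h}$ and the highest weight vector $y_\beta$, and apply the definition of local automorphism to $x=h_0+y_\beta$. Since $h_0$ is diagonal, $h_0^t=h_0$, so any matching automorphism $\Phi$ satisfies $\Phi_{\mathfrak{sl}_n}(h_0)=\Delta_{\mathfrak{sl}_n}(h_0)=h_0$. Then Lemma~\ref{spi} applies directly and yields $\Phi(y_\beta)=\lambda_\beta y_\beta$. Comparing the $\mathcal{I}$-components of $\Delta(x)=\Phi(x)$ (all $\mathfrak{sl}_n$-to-$\mathcal{I}$ blocks being zero) gives $\Delta_{\mathcal{I}}(y_\beta)=\lambda_\beta y_\beta\in\mathbb{C}y_\beta$, so $\Delta$ keeps the highest weight line invariant.

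The second, harder step is to choose an element whose matching automorphism is forced to move that line. The key is that $\Delta_{\mathfrak{sl}_n}=(\cdot)^t$ requires, at each $m\in\mathfrak{sl}_n$, a genuine automorphism carrying $m$ to $m^t$. I would select $m$ regular semisimple with spectrum not symmetric about the origin; then $m$ is not similar to $-m$, so by the classification of automorphisms of $\mathfrak{sl}_n$ no matching automorphism can have the outer form $z\mapsto -az^ta^{-1}$, and the matching $\Phi_{\mathfrak{sl}_n}$ must be an inner conjugation $z\mapsto gzg^{-1}$ with $gmg^{-1}=m^t$. I would arrange $m$ concretely (persymmetric about the anti-diagonal, so that the reversal matrix $J$ is one conjugator, with eigenvectors in general position relative to the standard flag) so that the whole coset of conjugators misses the parabolic $P_\beta$ that stabilizes the highest weight line $\mathbb{C}y_\beta$. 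Applying the local automorphism property to $x=m+y_\beta$ and reading the $\mathcal{I}$-component gives $\Delta_{\mathcal{I}}(y_\beta)=\Phi_{\mathcal{I}}(y_\beta)$, where $\Phi_{\mathcal{I}}$ is the lift of $g$ to $\mathcal{I}$; since $g\notin P_\beta$, this vector is not proportional to $y_\beta$, contradicting the first step. Conceptually, the reversal conjugation carries the Borel to the opposite Borel and hence the highest weight line toward the lowest weight line $\mathcal{I}_{w_0\beta}$, and $w_0\beta\neq\beta$ because $\beta\neq 0$.

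The main obstacle is precisely this construction: guaranteeing that \emph{every} conjugator of $m$ to $m^t$ moves the highest weight line, i.e. that the coset $g_0 Z(m)$ is disjoint from $P_\beta$, which amounts to a genericity statement about the torus $Z(m)$ relative to the flag and must be made uniform over all admissible irreducible $\mathcal{I}$, including those with higher weight multiplicities. Two further cases need separate attention, which I expect to require extra bookkeeping rather than new ideas: the exceptional dimension $\dim\mathcal{I}=\dim\mathfrak{sl}_n$, where $\Delta_{\mathfrak{sl}_n,\mathcal{I}}$ may be nonzero and one must project onto $\mathcal{I}_\beta$ using that the image of $\mathfrak{h}$ lies in weight-zero spaces; and the degenerate case $n=2$, where every traceless element has spectrum $\{\mu,-\mu\}$ symmetric about $0$ and $\mathfrak{sl}_2$ admits no outer automorphism, so the spectrum trick fails and a direct computation with the explicit module is needed.
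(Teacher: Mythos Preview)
Your first step---testing at $x=h_0+y_\beta$ and invoking Lemma~\ref{spi} to force $\Delta(y_\beta)$ into $\lambda_\beta y_\beta$ modulo a weight-zero correction---is exactly what the paper does.

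Your second step is a genuinely different route, and the obstacle you flag is a real gap, not just bookkeeping. You need the entire coset $g_0 Z(m)$ of conjugators of $m$ to $m^t$ to miss the stabilizer $P_\beta$ of the highest weight line, and you need this uniformly over all irreducible $\mathfrak{sl}_n$-modules $\mathcal{I}$. The parabolic $P_\beta$ varies with $\mathcal{I}$ (it can be any standard maximal parabolic, or a Borel, depending on the highest weight), and a maximal torus translated by the reversal matrix can meet large parabolics; nothing in your sketch rules this out. The $n=2$ and $\dim\mathcal{I}=n^2-1$ cases you defer are also not clearly easier. More fundamentally, your argument never uses the Leibniz (non-Lie) structure of $\mathfrak{sl}_n\dot+\mathcal{I}$, and that is the missing idea.

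The paper's second step is far simpler and exploits exactly this. Take $z=e_\alpha+y_\beta$ for a positive root $\alpha$ and compute the Leibniz square $[\Delta(z),\Delta(z)]$ two ways. Through the matching automorphism $\Phi^z$ it equals $\Phi^z([z,z])=\Phi^z([y_\beta,e_\alpha])=0$, since $y_\beta$ is highest weight. Directly, $\Delta_{\mathfrak{sl}_n}(e_\alpha)=e_\alpha^t=e_{-\alpha}$, so the square expands to $\lambda_\beta[y_\beta,e_{-\alpha}]+[y_0,e_{-\alpha}]\in\mathcal{I}_{\beta-\alpha}\oplus\mathcal{I}_{-\alpha}$ (the term $[\Delta_{\mathfrak{sl}_n,\mathcal{I}}(e_\alpha),e_{-\alpha}]$ is shown to vanish separately by the same squaring trick applied to $e_\alpha$ alone). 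The weight components are independent, so $\lambda_\beta[y_\beta,e_{-\alpha}]=0$ for every positive $\alpha$; together with $[y_\beta,e_\alpha]=0$ this gives $[\lambda_\beta y_\beta,\mathfrak{sl}_n]=0$, whence $\lambda_\beta=0$ by irreducibility. Then $\Phi^x_{\mathcal{I}}(y_\beta)=0$, contradicting invertibility. No spectrum conditions, no coset--parabolic intersection, no case split on $n$ or $\dim\mathcal{I}$.
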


\begin{proof} Suppose that
\(\Delta\) is a local automorphism. Take an element
\(x=h_0+y_\beta\in \mathfrak{sl}_n \dot +\mathcal{I},\) where
\(h_0\in\mathfrak{h}\) is a strongly regular element and
\(y_\beta\) is a highest weight vector of \(\mathcal{I},\) i.e.,
\([y_\beta, e_\alpha]=0\) for every positive root \(\alpha.\) Take
an automorphism \(\Phi^{h_0}\) on \(\mathfrak{sl}_n\dot
+\mathcal{I}\) such that \(\Delta(h_0)=\Phi^{h_0}(h_0).\) Then
\begin{eqnarray*}
h_0+\Delta_{\mathfrak{sl}_n, \mathcal{I}}(h_0) & = &
\Delta_{\mathfrak{sl}_n}(h_0)+\Delta_{\mathfrak{sl}_n, \mathcal{I}}(h_)=\Delta(h_0)=\\
&=& \Phi^{h_0}(h_0)=
\Phi^{h_0}_{\mathfrak{sl}_n}(h_0)+\Phi^{h_0}_{\mathfrak{sl}_n,
\mathcal{I}}(h_0).
\end{eqnarray*}
Thus
\[
\Phi^{h_0}_{\mathfrak{sl}_n}(h_0)=h_0,
\]
and therefore
\[
\Phi^{h_0}_{\mathfrak{sl}_n, \mathcal{I}}(h_0)=\omega_1
\theta(\Phi^{h_0}_{\mathfrak{sl}_n}(h_0))=\omega_1\theta(h_0)=y^{(1)}_0\in
\mathcal{I}_0.
\]
Further
\begin{eqnarray*}
\Delta(h_0+y_\beta) & = &
\Delta(h_0)+\Delta(y_\beta)=\Phi^{h_0}(h_0)+\Delta(y_\beta)=\\
&=& \Phi_{\mathfrak{sl}_n}^{h_0}(h_0)+\Phi_{\mathfrak{sl}_n,
\mathcal{I}}^{h_0}(h_0)+\Delta(y_\beta)=
h_0+y^{(1)}_0+\Delta(y_\beta).
\end{eqnarray*}
Take an automorphism \(\Phi^x\) on \(\mathfrak{sl}_n\dot
+\mathcal{I}\) such that \(\Delta(x)=\Phi^x(x).\) Then
\begin{eqnarray*}
\Delta(h_0+y_\beta) & = &
\Phi^x(h_0+y_\beta)=\Phi^x_{\mathfrak{sl}_n}(h_0)+\Phi^x_{\mathfrak{sl}_n,
\mathcal{I}}(h_0)+\Phi^x_{\mathcal{I}}(y_\beta).
\end{eqnarray*}
Comparing the last two equalities, we obtain that
\(\Phi^x_{\mathfrak{sl}_n}(h_0)=h_0,\) and
\[
\Phi^{x}_{\mathfrak{sl}_n,
\mathcal{I}}(h_0)=\omega_2\theta(\Phi^{x}_{\mathfrak{sl}_n}(h_0))=\omega_2\theta(h_0)=y^{(2)}_0\in
\mathcal{I}_0.
\]
By Lemma~\ref{spi} it follows that
\(\Phi^x_{\mathcal{I}}(y_\beta)=\lambda_\beta y_\beta.\) Hence
\[
\Delta(y_\beta)=\lambda_\beta y_\beta+y_0,
\]
where \(y_0=y_0^{(2)}-y_0^{(1)}\in \mathcal{I}_0.\)

Now let us take an element \(z=e_\alpha+y_\beta,\) where \(\beta\)
is the highest weight of \(\mathcal{I}\) and \(\alpha\in R\) is  a
positive root.  Taking into account that \(\left[y_\beta,
e_\alpha\right]=0,\) we have
\begin{eqnarray*}
\left[\Delta(e_\alpha+y_\beta), \Delta(e_\alpha+y_\beta)\right]& =
& \left[\Phi^z(e_\alpha+y_\beta),
\Phi^z(e_\alpha+y_\beta)\right]=\Phi^z\left(\left[e_\alpha+y_\beta,
e_\alpha+y_\beta\right]\right)=\\
&=& \Phi^z([y_\beta, e_\alpha])=0.
\end{eqnarray*}
On the other hand,
\begin{eqnarray*}
[\Delta_{\mathcal{S}, \mathcal{I}}(e_\alpha), e_{-\alpha}] & = &
[\Delta_{\mathcal{S}, \mathcal{I}}(e_\alpha),
\Delta_{\mathcal{S}}(e_\alpha)]=\\
&=& [\Delta_{\mathcal{S}}(e_\alpha)+\Delta_{\mathcal{S},
\mathcal{I}}(e_\alpha),
\Delta_{\mathcal{S}}(e_\alpha)+\Delta_{\mathcal{S},
\mathcal{I}}(e_\alpha)]=\\
&=& [\Delta(e_\alpha),
\Delta(e_\alpha)]=[\Phi^{e_\alpha}(e_\alpha),
\Phi^{e_\alpha}(e_\alpha)]=\Phi^{e_\alpha}([e_\alpha, e_\alpha])=0
\end{eqnarray*}
and
\begin{eqnarray*}
\left[\Delta(e_\alpha+y_\beta), \Delta(e_\alpha+y_\beta)\right]& =
& \left[\Delta(e_\alpha)+\Delta(y_\beta),
\Delta(e_\alpha)+\Delta(y_\beta)\right]=\\
&=& [\Delta_\mathcal{S}(e_\alpha)+\Delta_{\mathcal{S},
\mathcal{I}}(e_\alpha)+\Delta(y_\beta),
\\ & & \Delta_\mathcal{S}(e_\alpha)+\Delta_{\mathcal{S},
\mathcal{I}}(e_\alpha)+\Delta(y_\beta)]=\\
&=& [e_{-\alpha}+\Delta_{\mathcal{S},
\mathcal{I}}(e_\alpha)+\lambda_\beta y_\beta+y_0,
\\ & & e_{-\alpha}+\Delta_{\mathcal{S},
\mathcal{I}}(e_\alpha)+\lambda_\beta y_\beta+y_0]=\\
&= & \left[\lambda_\beta y_\beta, e_{-\alpha}\right]+[y_0,
e_{-\alpha}]\in \mathcal{I}_{\beta-\alpha}+\mathcal{I}_{-\alpha},
\end{eqnarray*}
Thus \(\left[\lambda_\beta  y_\beta, e_{-\alpha}\right]=0\) for
all positive roots~\(\alpha,\) and therefore \(\left[\lambda_\beta
y_\beta, e_{\gamma}\right]=0\) for all roots~\(\gamma\in R.\) This
means that \(\left[\lambda_\beta y_\beta,
\mathfrak{sl}_n\right]=0.\) Since \(\mathcal{I}\) is  an
irreducible \(\mathfrak{sl}_n\)-module, it follows that
\(\lambda_\beta=0.\) So
\(\Phi^x_\mathcal{I}(y_\beta)=\lambda_\beta y_\beta=0,\) which
contradicts that \(\Phi^x_\mathcal{I}\) is invertible, because
\(\Phi^x_\mathcal{I}\) is an autmorphism. The proof is complete.
\end{proof}

\begin{lemma}\label{lm2}
Let \(\Delta : \mathfrak{sl}_n \dot +\mathcal{I}\to
\mathfrak{sl}_n \dot +\mathcal{I}\) be a linear mapping such that
\(\Delta_{\mathfrak{sl}_n}(x)=-x\) for all \(\mathfrak{sl}_n.\)
Then \(\Delta\) is not a local automorphism.
\end{lemma}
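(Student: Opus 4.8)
The plan is to mirror the proof of Lemma~\ref{lm1}: assume $\Delta$ is a local automorphism and derive a contradiction, the new ingredient being Lemma~\ref{lm0}, which governs the case $\Phi_{\mathfrak{sl}_n}(h_0)=-h_0$. First I would fix a strongly regular $h_0\in\mathfrak{h}$ and a highest weight vector $y_\beta$ of $\mathcal{I}$, and set $x=h_0+y_\beta$. Choosing an automorphism $\Phi^{h_0}$ with $\Delta(h_0)=\Phi^{h_0}(h_0)$ and comparing $\mathfrak{sl}_n$- and $\mathcal{I}$-components through the block decomposition~\eqref{autoloc}, the hypothesis $\Delta_{\mathfrak{sl}_n}(h_0)=-h_0$ forces $\Phi^{h_0}_{\mathfrak{sl}_n}(h_0)=-h_0$ and $\Delta_{\mathfrak{sl}_n,\mathcal{I}}(h_0)=\Phi^{h_0}_{\mathfrak{sl}_n,\mathcal{I}}(h_0)=:y_0^{(1)}\in\mathcal{I}_0$ (the zero-weight membership coming, as in Lemma~\ref{lm1}, from $\Phi_{\mathfrak{sl}_n,\mathcal{I}}=\omega\,\theta\circ\Phi_{\mathfrak{sl}_n}$ and the fact that $\theta$ carries the zero weight space of $\mathfrak{sl}_n$ into $\mathcal{I}_0$). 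Repeating this with an automorphism $\Phi^{x}$ realizing $\Delta$ at $x$ yields $\Phi^{x}_{\mathfrak{sl}_n}(h_0)=-h_0$ and, after cancelling the zero-weight terms, $\Delta(y_\beta)=\Phi^{x}_{\mathcal{I}}(y_\beta)+y_0$ with $y_0\in\mathcal{I}_0$.

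The decisive step is to locate $\Phi^{x}_{\mathcal{I}}(y_\beta)$. Because $\Phi^{x}_{\mathfrak{sl}_n}(h_0)=-h_0$, Lemma~\ref{lm0} gives $\Phi^{x}_{\mathfrak{sl}_n}(u)=-au^{t}a^{-1}$ for some invertible diagonal $a$, whence $\Phi^{x}_{\mathfrak{sl}_n}(e_\alpha)$ is a nonzero multiple of $e_{-\alpha}$ for every positive root $\alpha$. Substituting into the module-homomorphism identity $[\Phi^{x}_{\mathcal{I}}(y_\beta),\Phi^{x}_{\mathfrak{sl}_n}(e_\alpha)]=\Phi^{x}_{\mathcal{I}}([y_\beta,e_\alpha])=0$ shows $[\Phi^{x}_{\mathcal{I}}(y_\beta),e_{-\alpha}]=0$ for all positive $\alpha$, so $\Phi^{x}_{\mathcal{I}}(y_\beta)$ is a \emph{lowest} weight vector $\mu\,y_{\mathrm{low}}$, with $\mu\neq0$ by invertibility of $\Phi^{x}_{\mathcal{I}}$, and a one-line Cartan computation pins its weight to $-\beta$, i.e. $y_{\mathrm{low}}\in\mathcal{I}_{-\beta}$. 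This is exactly where the argument diverges from Lemma~\ref{lm1}: there $\Phi^{x}_{\mathcal{I}}(y_\beta)$ stayed in the highest weight space, whereas the sign in $\Delta_{\mathfrak{sl}_n}=-\mathrm{id}$ transports it to the opposite end of the weight diagram. Getting this reversal right is the crux of the proof.

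Finally I would test $z=e_\alpha+y_\beta$, for which $[z,z]=[y_\beta,e_\alpha]=0$, so $[\Delta(z),\Delta(z)]=0$ for the automorphism $\Phi^{z}$ matching $\Delta$ at $z$. Expanding the Leibniz bracket with $\Delta_{\mathfrak{sl}_n}(e_\alpha)=-e_\alpha$, together with the auxiliary identity $[\Delta_{\mathfrak{sl}_n,\mathcal{I}}(e_\alpha),e_\alpha]=0$ (obtained exactly as in Lemma~\ref{lm1} by applying $\Phi^{e_\alpha}$ to $[e_\alpha,e_\alpha]=0$), leaves
\[
0=\mu\,[y_{\mathrm{low}},e_\alpha]+[y_0,e_\alpha].
\]
The two summands lie in the distinct weight spaces $\mathcal{I}_{-\beta-\alpha}$ and $\mathcal{I}_{-\alpha}$ because $\beta\neq0$, so $\mu\,[y_{\mathrm{low}},e_\alpha]=0$, and $\mu\neq0$ forces $[y_{\mathrm{low}},e_\alpha]=0$ for every positive root $\alpha$. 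Combined with $[y_{\mathrm{low}},e_{-\alpha}]=0$, the vector $y_{\mathrm{low}}$ is annihilated by every root vector, so $\mathbb{C}y_{\mathrm{low}}$ is a one-dimensional submodule of the irreducible $\mathfrak{sl}_n$-module $\mathcal{I}$; since $\dim\mathcal{I}>1$ this is impossible, and therefore $\Delta$ cannot be a local automorphism. I expect the only genuinely delicate point to be the weight bookkeeping in the last display—namely ensuring $y_0\in\mathcal{I}_0$ and $\beta\neq0$, so that the two terms cannot merge and cancel.
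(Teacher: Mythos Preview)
Your proposal is correct and follows essentially the same line as the paper's proof: reduce to $\Phi^{x}_{\mathfrak{sl}_n}(h_0)=-h_0$, deduce that $\Phi^{x}_{\mathcal{I}}(y_\beta)$ lands in the lowest weight space $\mathcal{I}_{-\beta}$, and then use the test element $e_\alpha+y_\beta$ together with the weight separation $\mathcal{I}_{-\beta-\alpha}\neq\mathcal{I}_{-\alpha}$ to force a contradiction. The paper is terser---it simply asserts $\Phi^{x}_{\mathcal{I}}(y_\beta)=\lambda_\beta y_{-\beta}$ without spelling out the appeal to Lemma~\ref{lm0} or the Cartan check, and it phrases the endgame as ``$\lambda_\beta=0$, contradicting invertibility'' rather than your equivalent submodule argument---but the underlying mechanism is identical.
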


\begin{proof} Assume \(\Delta\) is a local automorphism.
Take an element \(x=h_0+y_\beta\in \mathfrak{sl}_n \dot
+\mathcal{I},\) where \(h_0\in\mathfrak{h}\) is a strongly regular
element and \(y_\beta\) is a highest weight vector of
\(\mathcal{I},\) i.e., \([y_\beta, e_\alpha]=0\) for all positive
root \(\alpha.\) We have
\begin{eqnarray*}
\Delta(h_0+y_\beta) & = &
\Delta(h_0)+\Delta(y_\beta)=\Phi^h(h_0)+\Delta(y_\beta)=\\
&=& -h_0+y^{(1)}_0+\Delta(y_\beta).
\end{eqnarray*}
Take an automorphism \(\Phi^x\) on \(\mathfrak{sl}_n\dot
+\mathcal{I}\) such that \(\Delta(x)=\Phi^x(x).\) Then
\begin{eqnarray*}
\Delta(h_0+y_\beta) & = &
\Phi^x(h_0+y_\beta)=\Phi^x_{\mathfrak{sl}_n}(h_0)+\Phi^x_{\mathfrak{sl}_n,
\mathcal{I}}(h_0)+\Phi^x_{\mathcal{I}}(y_\beta).
\end{eqnarray*}
Comparing the last two equalities, we obtain that
\[
\Phi^x_{\mathfrak{sl}_n}(h_0)=-h_0.
\]
Thus \(\Phi^x_{\mathfrak{sl}_n, \mathcal{I}}(h_0)=y^{(2)}_0\in
\mathcal{I}_0\) and \(\Phi^x_{\mathcal{I}}(y_\beta)=\lambda_\beta
y_{-\beta}.\) Hence
\[
\Delta(y_\beta)=\lambda_\beta y_{-\beta}+y_0.
\]

Let \(x=e_\alpha+y_\beta,\) where \(\beta\) be the highest weight
of \(\mathcal{I}\) and \(\alpha\) be a positive  root of
\(\mathfrak{sl}_n.\) As in the proof of the previous  Lemma, we
obtain that
\begin{eqnarray*}
\left[\Delta(e_\alpha+y_\beta), \Delta(e_\alpha+y_\beta)\right]& =
& \left[\Delta(e_\alpha)+\Delta(y_\beta),
\Delta(e_\alpha)+\Delta(y_\beta)\right]=\\
&=& [-e_{\alpha}+\Delta_{\mathfrak{sl}_n,
\mathcal{I}}(e_{\alpha})+\lambda_\beta y_{-\beta}+y_0,
\\
& & -e_{\alpha}+\Delta_{\mathfrak{sl}_n,
\mathcal{I}}(e_{\alpha})+\lambda_\beta y_{-\beta}+y_0]=\\
&=&-\lambda_\beta \left[y_{-\beta}, e_{\alpha}\right]+[y_0,
e_{-\alpha}]
\end{eqnarray*}
and
\begin{eqnarray*}
\left[\Delta(e_\alpha+y_\beta), \Delta(e_\alpha+y_\beta)\right]& =
& \left[\Phi^x(e_\alpha+y_\beta),
\Phi^x(e_\alpha+y_\beta)\right]=\Phi^x\left(\left[e_\alpha+y_\beta,
e_\alpha+y_\beta\right]\right)=\\
&=& \Phi^x([y_\beta, e_\alpha])=0,
\end{eqnarray*}
because \(\left[y_\beta, e_\alpha\right]=0.\) Thus \(\lambda_\beta
\left[y_{-\beta}, e_{\alpha}\right]=0\) for all \(\alpha.\) Hence
\(\lambda_\beta=0\) and therefore  \(\Delta(y_\beta)=0,\) which
contradicts the inversibility of  \(\Delta.\)  The proof is
complete.
\end{proof}

The following Lemma is a particular case of \cite[Theorem
6.9]{AKO}.

\begin{lemma}\label{lm3}
Let \(\Delta : \mathfrak{sl}_n \dot +\mathcal{I}\to
\mathfrak{sl}_n \dot +\mathcal{I}\) be a local automorphism such
that \(\Delta_{\mathfrak{sl}_n}(x)=x\) for all \(x\in
\mathfrak{sl}_n.\) Then \(\Delta\) is an automorphism.
\end{lemma}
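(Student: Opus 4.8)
The plan is to exploit the block form \eqref{autoloc} of a local automorphism together with the description \eqref{autoform} of genuine automorphisms. Since $\Delta_{\mathfrak{sl}_n}=\mathrm{id}$, a linear map of the form \eqref{autoloc} is an automorphism precisely when its corner $\Delta_{\mathcal S,\mathcal I}$ is an $\mathfrak{sl}_n$-module homomorphism from $\mathcal S$ to $\mathcal I$ and $\Delta_{\mathcal I}$ is a module isomorphism of $\mathcal I$; indeed, with $\Phi_{\mathcal S}=\mathrm{id}$ the two module identities displayed after \eqref{autoform} reduce exactly to $[\Delta_{\mathcal S,\mathcal I}(x),y]=\Delta_{\mathcal S,\mathcal I}([x,y])$ and $[\Delta_{\mathcal I}(i),y]=\Delta_{\mathcal I}([i,y])$. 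Thus I would reduce the whole problem to showing that $\Delta_{\mathcal I}$ is a module endomorphism and $\Delta_{\mathcal S,\mathcal I}$ is a module homomorphism: once $\Delta_{\mathcal I}$ is a module endomorphism, Schur's lemma forces $\Delta_{\mathcal I}=\mu\,\mathrm{id}$, and $\mu\neq 0$ because $\Delta$ is injective, so $\Delta_{\mathcal I}$ is automatically an isomorphism.

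I would dispose of $\Delta_{\mathcal S,\mathcal I}$ first. Evaluating the local automorphism at $x=s\in\mathcal S$ and comparing $\mathcal I$-components gives $\Delta_{\mathcal S,\mathcal I}(s)=\Phi^{s}_{\mathcal S,\mathcal I}(s)$; writing $\Phi^{s}_{\mathcal S,\mathcal I}=\omega(s)\,\theta\circ\Phi^{s}_{\mathcal S}$ as in the remark after \eqref{autoform} and using $\Phi^{s}_{\mathcal S}(s)=\Delta_{\mathcal S}(s)=s$, one gets $\Delta_{\mathcal S,\mathcal I}(s)=\omega(s)\,\theta(s)$. If $\dim\mathcal S\neq\dim\mathcal I$ there is no such $\theta$ and $\Delta_{\mathcal S,\mathcal I}=0$; otherwise $\theta$ is a fixed module isomorphism $\mathcal S\to\mathcal I$, and $\theta^{-1}\Delta_{\mathcal S,\mathcal I}(s)\in\mathbb C\, s$ for every $s$ shows that the linear map $\theta^{-1}\circ\Delta_{\mathcal S,\mathcal I}$ has every vector as an eigenvector, hence is a scalar. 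In either case $\Delta_{\mathcal S,\mathcal I}$ is a module homomorphism, say $\omega\theta$. Composing $\Delta$ on the left with the automorphism whose blocks in \eqref{autoform} are $(\mathrm{id},\mathrm{id})$ on the diagonal and $-\omega\theta$ in the corner, I reduce to the case $\Delta_{\mathcal S,\mathcal I}=0$, which I assume henceforth; the composition is again a local automorphism with $\mathcal S$-part the identity.

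The heart of the argument is that $\Delta_{\mathcal I}$ is a module endomorphism, and here I would use strongly regular elements. For a strongly regular $h_0\in\mathfrak h$ and arbitrary $v\in\mathcal I$, applying the definition at $x=h_0+v$ and comparing $\mathcal S$-parts gives $\Phi^{x}_{\mathcal S}(h_0)=h_0$; by \cite[Lemma 2.2]{AK1601} the automorphism $\Phi^{x}_{\mathcal S}$ then fixes $\mathfrak h$ pointwise and scales every root space, so it equals $\mathrm{Ad}(g)$ for some $g$ in the maximal torus $T$ centralizing $h_0$ (in particular the transpose-type automorphisms are excluded). By Schur the accompanying isomorphism $\Phi^{x}_{\mathcal I}\colon\mathcal I\to\mathcal I^{\mathrm{Ad}(g)}$ is $\nu\,(g\cdot)$ for a scalar $\nu$, so comparing $\mathcal I$-components yields $\Delta_{\mathcal I}(v)=\nu\,(g\cdot v)$; the residual term $\Phi^{x}_{\mathcal S,\mathcal I}(h_0)$ vanishes unless $\mathcal I$ is the adjoint module, in which case it lies in the $T$-fixed line $\mathbb C\,\theta(h_0)$ and is handled separately. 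Hence $\Delta_{\mathcal I}(v)\in\mathbb C^{\ast}\,(T\!\cdot v)$ for every $v$, and, repeating the computation with strongly regular elements of an arbitrary Cartan subalgebra, $\Delta_{\mathcal I}(v)\in\mathbb C^{\ast}\,(T'\!\cdot v)$ for every maximal torus $T'$.

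The main obstacle is to extract scalarity from this orbit condition. I would take two Cartan subalgebras $\mathfrak h$ and $\mathfrak h'$ in general position, so that $T\cap T'$ is the centre of $\mathrm{SL}_n$, which acts trivially under $\mathrm{Ad}$ and by a scalar on $\mathcal I$. For $v$ in general position the orbits $T\!\cdot v$ and $T'\!\cdot v$ are distinct curves through $v$, so the cones $\mathbb C^{\ast}(T\!\cdot v)$ and $\mathbb C^{\ast}(T'\!\cdot v)$ meet only along $\mathbb C^{\ast}v$, forcing $\Delta_{\mathcal I}(v)\in\mathbb C^{\ast}v$; since this holds on a dense set and $\Delta_{\mathcal I}$ is linear, every vector is an eigenvector and $\Delta_{\mathcal I}=\mu\,\mathrm{id}$. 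Combined with the first two paragraphs this shows $\Delta$ is an automorphism. I expect the genuine difficulty to be exactly this transversality of the two torus orbits (the cone-intersection step), together with the bookkeeping of the extra term $\Phi^{x}_{\mathcal S,\mathcal I}(h_0)$ in the adjoint case $\mathcal I\cong\mathfrak{sl}_n$.
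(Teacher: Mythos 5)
First, a point of comparison: the paper does not actually prove this lemma --- it is quoted verbatim as a particular case of \cite[Theorem 6.9]{AKO} --- so any argument you give is necessarily independent of the text. Judged on its own terms, the first half of your proposal is sound. The reduction is correct: evaluating at \(s\in\mathfrak{sl}_n\) gives \(\Delta_{\mathcal{S},\mathcal{I}}(s)=\omega_s\theta_s(s)\), and although \(\theta_s\) depends on \(\Phi^s_{\mathcal{S}}\) (a point you gloss over), one has \(\theta_s=c_s\,\iota\circ\Phi^s_{\mathcal S}\) for a fixed module isomorphism \(\iota:\mathcal S\to\mathcal I\), so \(\Phi^s_{\mathcal S}(s)=s\) still yields \(\Delta_{\mathcal{S},\mathcal{I}}(s)\in\mathbb{C}\,\iota(s)\) and your scalar-eigenvector argument makes \(\Delta_{\mathcal{S},\mathcal{I}}\) a module homomorphism that can be split off. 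Likewise, the derivation of \(\Delta_{\mathcal I}(v)\in\mathbb{C}^{\ast}\rho(T)v\) (non-adjoint case) via strongly regular elements, \cite[Lemma 2.2]{AK1601} and Schur is fine, and matches the mechanism the paper itself uses in Lemmas~\ref{lm0}--\ref{lm2}.

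The genuine gap is the final step, and it is not merely unproven --- as stated it is false. Take \(\mathcal{I}=\mathbb{C}^n\), the natural module, which gives a bona fide simple Leibniz algebra \(\mathfrak{sl}_n\dot+\mathbb{C}^n\). If \(v\) has all coordinates nonzero in the weight basis of \(T\), then \(\mathbb{C}^{\ast}(T\cdot v)=\{(\lambda t_1v_1,\dots,\lambda t_nv_n):\prod t_i=1,\ \lambda\neq0\}\) is the set of \emph{all} vectors with full support, a Zariski-dense open subset of \(\mathcal I\); hence for any two maximal tori the two cones meet in a dense open set, not along \(\mathbb{C}^{\ast}v\), and choosing \(v\) ``in general position'' makes the situation worse, not better. (Density can only occur when \(\dim\mathcal I\le n\), but even for larger modules, dimension counting never shows the two cones share nothing beyond \(\mathbb{C}^{\ast}v\), and your argument needs exactly that.) The usable leverage sits at the \emph{special} vectors, the opposite of your plan: e.g.\ in the natural module every line is a weight line of some Cartan subalgebra, so your orbit condition applied to weight vectors immediately gives \(\Delta_{\mathcal I}(v)\in\mathbb{C}^{\ast}v\) --- a weight-space analysis in the spirit of Lemmas~\ref{lm1} and \ref{lm2} (and of \cite[Theorem 6.9]{AKO}), not a transversality argument. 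Finally, in the adjoint case \(\mathcal I\cong\mathfrak{sl}_n\) the residual term \(\Phi^{x}_{\mathcal S,\mathcal I}(h_0)\in\mathcal I_0\) contaminates the orbit condition itself, so that you only get \(\Delta_{\mathcal I}(v)\in\mathbb{C}^{\ast}\rho(T)v+\mathcal I_0\); saying it ``is handled separately'' defers precisely the case where the extra work lies. So: correct reduction and correct orbit constraints, but the decisive scalarity step rests on a claim with a counterexample and an unexecuted case.
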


Denote by \(\tau:\mathfrak{sl}_n \to \mathfrak{sl}_n\) the
automorphism of \(\mathfrak{sl}_n\) defined by
\[
\tau(x)=-x^t,\, x\in \mathfrak{sl}_n.
\]

\begin{lemma}\label{lm4}
Let \(\Delta : \mathfrak{sl}_n \dot +\mathcal{I}\to
\mathfrak{sl}_n \dot +\mathcal{I}\) be a local automorphism such
that \(\Delta_{\mathfrak{sl}_n}=\tau.\)  Then \(\mathcal{I}\cong
\mathcal{I}^{\tau}\) and \(\Delta\) is an automorphism.
\end{lemma}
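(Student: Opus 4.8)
The plan is to establish the two assertions in order: first to extract the module isomorphism $\mathcal{I}\cong\mathcal{I}^{\tau}$ from the mere existence of $\Delta$, and then to upgrade $\Delta$ to a genuine automorphism by reducing to the identity case treated in Lemma~\ref{lm3}.

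First I would fix a strongly regular element $h_0\in\mathfrak{h}$ and invoke the local automorphism property at the pure element $x=h_0\in\mathfrak{sl}_n$: there is an automorphism $\Phi^{h_0}$ of $\mathfrak{sl}_n\dot{+}\mathcal{I}$ with $\Phi^{h_0}(h_0)=\Delta(h_0)$, so that its $\mathfrak{sl}_n$-component satisfies $\Phi^{h_0}_{\mathfrak{sl}_n}(h_0)=\Delta_{\mathfrak{sl}_n}(h_0)=\tau(h_0)=-h_0$. Since $h_0$ is strongly regular, Lemma~\ref{lm0} produces an invertible diagonal matrix $a$ with $\Phi^{h_0}_{\mathfrak{sl}_n}(x)=-ax^t a^{-1}$ for all $x$; equivalently $\Phi^{h_0}_{\mathfrak{sl}_n}=\iota_a\circ\tau$, where $\iota_a(x)=axa^{-1}$ denotes the inner automorphism determined by $a$.

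Next I would read the module isomorphism off the block form \eqref{autoform} of the automorphism $\Phi^{h_0}$: its component $\Phi^{h_0}_{\mathcal{I}}$ is an $\mathfrak{sl}_n$-module isomorphism of $\mathcal{I}$ onto $\mathcal{I}^{\Phi^{h_0}_{\mathfrak{sl}_n}}$, whence $\mathcal{I}\cong\mathcal{I}^{\iota_a\circ\tau}$. Because $\iota_a$ is inner, the twisted modules $\mathcal{I}^{\iota_a\circ\tau}$ and $\mathcal{I}^{\tau}$ are isomorphic, and therefore $\mathcal{I}\cong\mathcal{I}^{\tau}$, which is the first assertion. By \cite{AKOZ} this isomorphism is exactly the condition guaranteeing that $\tau$ extends to an automorphism of $\mathfrak{sl}_n\dot{+}\mathcal{I}$; let $\varphi$ be such an extension, so that $\varphi_{\mathfrak{sl}_n}=\tau$. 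Then I set $\Psi=\varphi^{-1}\circ\Delta$. As the composition of the automorphism $\varphi^{-1}$ with the local automorphism $\Delta$, the map $\Psi$ is again a local automorphism, and its $\mathfrak{sl}_n$-component is $\Psi_{\mathfrak{sl}_n}=\varphi_{\mathfrak{sl}_n}^{-1}\circ\Delta_{\mathfrak{sl}_n}=\tau^{-1}\circ\tau=\mathrm{id}$, using $\tau^{2}=\mathrm{id}$. Lemma~\ref{lm3} now shows that $\Psi$ is an automorphism, and consequently $\Delta=\varphi\circ\Psi$ is an automorphism as well.

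The step I expect to demand the most care is the passage $\mathcal{I}^{\iota_a\circ\tau}\cong\mathcal{I}^{\tau}$: one must justify that twisting an irreducible $\mathfrak{sl}_n$-module by two automorphisms differing by an inner one yields isomorphic modules, i.e.\ that the isomorphism class of $\mathcal{I}^{\sigma}$ depends only on the image of $\sigma$ in the outer automorphism group. This is standard highest-weight theory, but it is the single point where the argument leaves the purely formal manipulation of the block decomposition and appeals to the representation theory of $\mathfrak{sl}_n$. Everything else is a direct chaining of Lemmas~\ref{lm0} and~\ref{lm3} with the extension criterion of \cite{AKOZ}.
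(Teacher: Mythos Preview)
Your argument is correct and follows essentially the same route as the paper: apply Lemma~\ref{lm0} at a strongly regular $h_0$ to see that the $\mathfrak{sl}_n$-part of $\Phi^{h_0}$ is $\iota_a\circ\tau$ with $a$ diagonal, deduce $\mathcal{I}\cong\mathcal{I}^{\tau}$, and then precompose $\Delta$ with an extension of $\tau$ to reduce to Lemma~\ref{lm3}. The only cosmetic difference is that the paper avoids your appeal to \cite{AKOZ} by observing that the needed extension of $\tau$ is already in hand, namely $\widetilde{\Phi}_a^{-1}\circ\Phi^{h_0}$ (where $\widetilde{\Phi}_a$ is the extension of the inner automorphism $\iota_a$), and uses this same map both to witness $\mathcal{I}\cong\mathcal{I}^{\tau}$ and to reduce to the identity case.
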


\begin{proof}
Let \(\Delta : \mathfrak{sl}_n \dot +\mathcal{I}\to
\mathfrak{sl}_n \dot +\mathcal{I}\) be a local automorphism such
that \(\Delta_{\mathfrak{sl}_n}(x)=-x^t\) for all \(x\in
\mathfrak{sl}_n.\) Let us show that \(\mathcal{I}\cong
\mathcal{I}^{\tau}.\)

Take an automorphism \(\Phi^{h_0}\) of \(\mathfrak{sl}_n\dot
+\mathcal{I}\) such that
\(\Phi_{\mathfrak{sl}_n}^{h_0}(h_0)=\Delta_{\mathfrak{sl}_n}(h_0)=-h_0,\)
where \(h_0\in \mathfrak{h}\) is a strongly regular element. By
Lemma~\ref{lm0} it follows that
\(\Phi_{\mathfrak{sl}_n}^{h_0}(x)=-a x^t a^{-1}\) for all \(x\in
\mathfrak{sl}_n,\) where \(a\) is an invertible diagonal matrix.
Then \(\widetilde{\Phi}_a^{-1}\circ \Phi^{h_0}\) is an
automorphism of \(\mathfrak{sl}_n\dot +\mathcal{I}\) such that
\(\left(\widetilde{\Phi}_a^{-1}\circ
\Phi^{h_0}\right)_{\mathfrak{sl}_n}(x)=-x^t\) for all \(x\in
\mathfrak{sl}_n,\) where \(\widetilde{\Phi}_a\) is an extension
onto \(\mathfrak{sl}_n\dot +\mathcal{I}\) of the inner
automorphism of \(\mathfrak{sl}_n,\) generated by the element
\(a.\) Thus the restriction \(\widetilde{\Phi}_a^{-1}\circ
\Phi^{h_0}|_{\mathcal{I}}\)  is a \(\mathfrak{sl}_n\)-module
isomorphism from \(\mathcal{I}\) onto \(\mathcal{I}^{\tau},\) that
is  \(\mathcal{I}\cong \mathcal{I}^{\tau}.\) Further \(\Delta
\circ\widetilde{\Phi}_a^{-1}\circ \Phi^{h_0}\) is a local
automorphism such that \(\left(\Delta
\circ\widetilde{\Phi}_a^{-1}\circ
\Phi^{h_0}\right)_{\mathfrak{sl}_n}\) is an identical map on
\(\mathfrak{sl}_n.\) Lemma~\ref{lm3} implies that
 \(\Delta\) is an automorphism. The proof is complete.
\end{proof}

\textit{Proof of Theorem~\ref{leibnizsimple}.} It is suffices to
show that every  local automorphism on \(\mathfrak{sl}_n\dot
+\mathcal{I}\) is an automorphism.

Let \(\Delta\) be a local automorphism on \(\mathfrak{sl}_n\dot
+\mathcal{I}.\) By Theorem~\ref{simplelie}, the restriction
\(\Delta_{\mathfrak{sl}_n}\) is either an automorphism or an
anti-automorphism, i.e., there exists an invertible matrix \(a\)
such that \(\Delta_{\mathfrak{sl}_n}(x)=\pm axa^{-1}\) or
\(\Delta_{\mathfrak{sl}_n}(x)=\pm ax^ta^{-1}\) for all \(x\in
\mathfrak{sl}_n.\) Thus \(\left(\widetilde{\Phi}_a^{-1}\circ
\Delta\right)_{\mathfrak{sl}_n}(x)=\pm x\) or
\(\left(\widetilde{\Phi}_a^{-1}\circ
\Delta\right)_{\mathfrak{sl}_n}(x)=\pm x^t,\) where
\(\widetilde{\Phi}_a\) is an extension onto \(\mathfrak{sl}_n\dot
+\mathcal{I}\) of the inner automorphism of \(\mathfrak{sl}_n,\)
generated by the element \(a.\) By Lemmata~\ref{lm1} and
\ref{lm2}, it follows that \(\left(\widetilde{\Phi}_a^{-1}\circ
\Delta\right)_{\mathfrak{sl}_n}\) can not be an anti-automorphism.
So it is an automorphism, i.e.,
\(\left(\widetilde{\Phi}_a^{-1}\circ
\Delta\right)_{\mathfrak{sl}_n}(x)=x\) or
\(\left(\widetilde{\Phi}_a^{-1}\circ
\Delta\right)_{\mathfrak{sl}_n}(x)=-x^t.\) Further
Lemmata~\ref{lm3}~and~\ref{lm4} imply that
\(\widetilde{\Phi}_a^{-1}\circ \Delta\) is an automorphism and
therefore \(\Delta\) is also an automorphism.  The proof is
complete.

\section{Local automorphisms  of filiform Lie  algebras}

In this section we consider a special class of nilpotent Lie
algebras, so-called filiform Lie algebras, and show that they
admit local automorphisms which are not automorphisms.

A Lie algebra $\mathcal{L}$ is called \textit{nilpotent} if
$\mathcal{L}^k=\{0\}$ for some $k \in \mathbb{N},$  where
$\mathcal{L}^0=\mathcal{L},$ $\mathcal{L}^k=[\mathcal{L}^{k-1},
\mathcal{L}],\, k\geq1.$

In particular, a nilpotent Lie algebra $\mathcal{L}$ is called
\textit{filiform} if $\dim \mathcal{L}^k=n-k-1$ for $1\leq k \leq
n-1.$

\begin{theorem}\label{pure}
Let $\mathcal{L}$ be a finite-dimensional filiform   Lie algebra
with $\dim \mathcal{L}\geq 3.$ Then  $\mathcal{L}$ admits a local
automorphism which is not an automorphism.
\end{theorem}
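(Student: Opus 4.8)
The plan is to exhibit, for a fixed adapted basis of $\mathcal{L}$, one explicit linear map that is visibly not an automorphism, and then to verify the local property by producing, for each vector, a genuine automorphism agreeing with it there. First I would fix a basis $e_1,\dots,e_n$ adapted to the lower central series, so that $\mathcal{L}^k=\langle e_{k+2},\dots,e_n\rangle$ and $[e_1,e_i]=e_{i+1}$ for $2\le i\le n-1$; in particular $e_n$ spans the one-dimensional ideal $\mathcal{L}^{n-2}$ and is central, while $[e_1,e_{n-2}]=e_{n-1}$. For $n\ge 4$ I would take $\Delta=\mathrm{id}+e_{n-1}^{\ast}\otimes e_n$, i.e. $\Delta(e_{n-1})=e_{n-1}+e_n$ and $\Delta(e_i)=e_i$ otherwise, so that $\Delta(x)=x+c_{n-1}e_n$, where $c_{n-1}$ is the $e_{n-1}$-coordinate of $x$. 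The Heisenberg case $n=3$ is exceptional, since there the same recipe produces an automorphism; there I would instead use $\Delta(e_1)=e_1$, $\Delta(e_2)=2e_2$, $\Delta(e_3)=e_3$, in the spirit of the trace-zero analysis of the earlier sections.

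That $\Delta$ is not an automorphism is immediate: applying $\Delta$ to $[e_1,e_{n-2}]=e_{n-1}$ gives $\Delta(e_{n-1})=e_{n-1}+e_n$, whereas $[\Delta(e_1),\Delta(e_{n-2})]=[e_1,e_{n-2}]=e_{n-1}$, and $e_n\neq 0$. So the whole content is to prove that $\Delta$ is a local automorphism, i.e. that $x+c_{n-1}e_n$ lies in the orbit $\mathrm{Aut}(\mathcal{L})\cdot x$ for every $x$.

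Here I would split according to the coordinates of $x=\sum_i c_i e_i$. When $c_1\neq 0$ or $c_2\neq 0$, I would use the central-shift automorphisms $e_1\mapsto e_1+\beta_1 e_n$, $e_2\mapsto e_2+\beta_2 e_n$ (fixing all other basis vectors); these are genuine automorphisms because $e_n$ is central and $\mathcal{L}^1$ is fixed pointwise, and they alter $x$ by $(c_1\beta_1+c_2\beta_2)e_n$, which can be set equal to $c_{n-1}e_n$. When $c_1=c_2=0$ but $x\in\mathcal{L}^{n-3}=\langle e_{n-1},e_n\rangle$, the inner automorphism $\exp(\mathrm{ad}_{e_1})$ works on the nose: since $[e_1,e_{n-1}]=e_n$ and $[e_1,e_n]=0$, the exponential truncates and $\exp(\mathrm{ad}_{e_1})(x)=x+c_{n-1}e_n$.

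The main obstacle is the remaining locus: vectors $x\in\mathcal{L}^1$ with $c_1=c_2=0$, $c_{n-1}\neq 0$, that do not lie in $\mathcal{L}^{n-3}$. For these the central shifts are unavailable, and $\mathrm{ad}_x$ no longer surjects onto $\mathcal{L}^1$, so the purely central correction $c_{n-1}e_n$ need not lie in the inner orbit of $x$ (indeed $[e_1,x]$ always drags along shallower terms). I expect this to be the crux, and my plan is to produce, for each such $x$, a nilpotent derivation $D$ with $D(x)=c_{n-1}e_n$: since every nilpotent derivation annihilates the center, $\exp(D)$ then sends $x$ to $x+c_{n-1}e_n$ exactly, and a Kostant--Rosenlicht dimension count for the unipotent group of inner automorphisms identifies which corrections are reachable. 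When $\mathcal{L}$ admits a nonzero semisimple derivation one may instead combine the resulting dilations with $\exp(\mathrm{ad}_{e_1})$ and solve the finitely many coordinate equations, which over $\mathbb{C}$ are always solvable; the genuinely delicate situation is the characteristically nilpotent one, where $\mathrm{Aut}(\mathcal{L})$ is unipotent, so the argument must rest on the structure of $\mathrm{Der}(\mathcal{L})$ and may force replacing the rank-one perturbation $e_{n-1}^{\ast}\otimes e_n$ by $e_{j}^{\ast}\otimes e_n$ for an index $j$ tied to a nonzero structure constant, chosen so that the required derivation is guaranteed to exist.
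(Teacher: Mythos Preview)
Your candidate $\Delta(x)=x+c_{n-1}e_n$ creates exactly the difficulty you flag as the ``main obstacle,'' and you do not actually close it: for $x\in\mathcal{L}^1\setminus\mathcal{L}^{n-3}$ with $c_{n-1}\neq 0$ you only sketch a hope that some nilpotent derivation $D$ with $D(x)=c_{n-1}e_n$ exists, and you yourself note that in the characteristically nilpotent case this may fail and force a different choice of $\Delta$. As written, this is a genuine gap: nothing guarantees such a $D$ for an arbitrary filiform $\mathcal{L}$, and the fallback of ``replacing $e_{n-1}^{\ast}\otimes e_n$ by $e_{j}^{\ast}\otimes e_n$'' is not carried out.

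The paper sidesteps this entirely by a smarter choice of perturbation index: it uses the $e_3$-coordinate rather than the $e_{n-1}$-coordinate, setting $\Delta(x)=x+x_3e_n$. The point is that the companion map $\Phi(x)=x+x_2e_{n-1}+x_3e_n$ is a genuine automorphism of \emph{every} filiform $\mathcal{L}$, because $[e_i,e_{n-1}]=0$ for all $i\ge 3$ (this follows from $[\mathcal{L}^1,\mathcal{L}^{n-3}]\subset\mathcal{L}^{n-1}=0$) and $e_n$ is central; one checks directly that $[\Phi(x),\Phi(y)]=\Phi([x,y])$. Now the local verification is a two-line dichotomy on $x_2$: if $x_2=0$ then $\Phi(x)=x+x_3e_n=\Delta(x)$; if $x_2\neq 0$ then the central shift $\Psi_\beta(u)=u+\beta u_2 e_n$ with $\beta=x_3/x_2$ does the job. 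No case analysis on deeper coordinates is needed, and the argument is uniform in $n\ge 3$ (so your separate Heisenberg treatment is also unnecessary). The moral is that choosing the perturbation to read off the \emph{shallowest} derived-series coordinate $x_3$, rather than the deepest non-central one $x_{n-1}$, is what makes the local-automorphism check trivial.
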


It is known \cite{Vergne} that there exists a basis $\{e_1, e_2,
\cdots,  e_n\}$ of  $\mathcal{L}$ such that
\begin{equation}\label{basicfili}
[e_1, e_i]=e_{i+1}
\end{equation}
for all $i\in \overline{2, n-1}.$

Note that a filiform Lie algebra $\mathcal{L}$ besides (9)  may
have also other non-trivial commutators.

From (9) it follows that  $\{e_{k+2}, \cdots, e_n\}$ is a basis in
$\mathcal{L}^k$ for all $1\leq k \leq n-2$ and $ e_n\in
Z(\mathcal{L}).$  Since $[\mathcal{L}^1,
\mathcal{L}^{n-3}]\subseteq \mathcal{L}^{n-1}=\{0\},$ it follows
that
\begin{equation}\label{nminus}
[e_i, e_{n-1}]=0
\end{equation}
for all $i=3, \cdots, n.$

Define a linear operator $\Phi$ on $\mathcal{L}$ by
\begin{equation}\label{filiformder}
\Phi\left(x\right)=x+\alpha x_2 e_{n-1}+x_3 e_n,\,
x=\sum\limits_{k=1}^n x_ke_k\in \mathcal{L},
\end{equation}
where $\alpha \in \mathbb{C}.$

\begin{lemma}\label{filider}
A linear operator $\Phi$ on $\mathcal{L}$ defined by~(11) is an
automorphism if and only if $\alpha=1.$
\end{lemma}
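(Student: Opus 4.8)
The plan is to verify directly that the linear operator $\Phi$ defined by~(11) respects the bracket if and only if $\alpha = 1$, treating invertibility as a secondary and easily dispatched matter. First I would note that $\Phi$ is automatically bijective for every $\alpha\in\mathbb{C}$: in the basis $\{e_1,\ldots,e_n\}$ it is the identity plus a strictly ``upward'' correction (sending $e_2\mapsto e_2+\alpha e_{n-1}$ and $e_3\mapsto e_3+e_n$, fixing all other basis vectors), so its matrix is unipotent and hence invertible. Therefore the content of the lemma lies entirely in the multiplicative condition $\Phi([x,y])=[\Phi(x),\Phi(y)]$, and the task reduces to checking this on basis brackets.

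Next I would exploit the structure relation~(9), namely $[e_1,e_i]=e_{i+1}$ for $2\le i\le n-1$, which gives the spine of the filiform algebra, together with the vanishing relations $e_n\in Z(\mathcal{L})$ and~(10), i.e. $[e_i,e_{n-1}]=0$ for $i\ge 3$. The decisive test bracket is $[e_1,e_2]=e_3$. Applying $\Phi$ to the left side yields $\Phi(e_3)=e_3+e_n$. Computing the right side, $[\Phi(e_1),\Phi(e_2)]=[e_1,\,e_2+\alpha e_{n-1}]=[e_1,e_2]+\alpha[e_1,e_{n-1}]=e_3+\alpha e_n$, where I use $[e_1,e_{n-1}]=e_n$ from~(9). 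Matching the two expressions forces $\alpha e_n=e_n$, hence $\alpha=1$. This single computation already delivers the ``only if'' direction and pins down the unique candidate value.

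For the ``if'' direction I would set $\alpha=1$ and check that $\Phi$ then preserves all brackets $[e_i,e_j]$. The key observation making this uniform is that the correction terms $\alpha x_2 e_{n-1}$ and $x_3 e_n$ land in the center-adjacent part of the algebra: by~(9), $e_{n-1}$ and $e_n$ bracket with everything except $e_1$ to give zero (indeed $e_n$ is central, and~(10) kills $[e_i,e_{n-1}]$ for $i\ge 3$, while $[e_1,e_{n-1}]=e_n$). Thus for brackets not involving $e_1$ the correction terms contribute nothing and $\Phi$ acts trivially on both sides; and for brackets of the form $[e_1,e_i]=e_{i+1}$ the only surviving correction comes from the $\alpha[e_1,e_{n-1}]=\alpha e_n$ term in the $i=2$ case, which with $\alpha=1$ exactly reproduces the $e_n$ summand in $\Phi(e_3)$. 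All other cases are seen to balance immediately.

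The main obstacle, such as it is, is bookkeeping rather than any conceptual difficulty: one must confirm that no bracket $[e_i,e_j]$ with $i,j\ge 2$ produces a component along $e_2$ or $e_3$ that would force the correction terms to interact, and that the possible extra non-trivial commutators permitted by the filiform structure (beyond~(9)) cannot spoil the identity. I expect these to be controlled by the grading coming from the lower central series, since every bracket raises the filiform degree and the correction terms sit in the top degrees $e_{n-1},e_n$; any product involving them is pushed into $\mathcal{L}^{n-1}=\{0\}$ except for the single surviving term $[e_1,e_{n-1}]=e_n$ already accounted for. Once this degree argument is in place, the verification closes and the lemma follows.
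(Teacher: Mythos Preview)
Your proof is correct and follows essentially the same approach as the paper. The only stylistic difference is that the paper computes $\Phi([x,y])$ and $[\Phi(x),\Phi(y)]$ for general $x=\sum x_k e_k$, $y=\sum y_k e_k$ in one stroke---writing $[x,y]=(x_1y_2-x_2y_1)e_3+z$ with $z\in\mathcal{L}^2$ to obtain $\Phi([x,y])=[x,y]+(x_1y_2-x_2y_1)e_n$ versus $[\Phi(x),\Phi(y)]=[x,y]+\alpha(x_1y_2-x_2y_1)e_n$---whereas you run the same computation basis-bracket by basis-bracket; your degree argument for the extra filiform brackets is exactly the $z\in\mathcal{L}^2$ step in the paper.
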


\begin{proof}   Let \(x=\sum\limits_{k=1}^n x_ke_k,\, y=\sum\limits_{k=1}^n y_ke_k \in \mathcal{L}.\)
Then \([x, y]=(x_1y_2-x_2y_1)e_3+z,\) where \(z\in
\mathcal{L}^2.\) Thus
\begin{eqnarray*}
\Phi([x, y])  & = & [x, y]+(x_1y_2-x_2y_1)e_n.
\end{eqnarray*}
Taking into account (10), we obtain that
\begin{eqnarray*}
[\Phi(x), \Phi(y)] & = & [x+\alpha x_2 e_{n-1}+x_3 e_n. y+\alpha
y_2 e_{n-1}+y_3 e_n]=\\
&=& [x,y]+\alpha(x_1y_2-x_2y_1)e_n.
\end{eqnarray*}
Comparing the last two equalities we obtain that the map  $\Phi$
is an automorphism if and only if $\alpha=1.$ The proof is
complete.
\end{proof}

Consider  the linear operator $\Delta$ defined by (11) with
$\alpha=0.$

\begin{lemma}\label{locfili}
The  linear operator $\Delta$ is a local automorphism  which is
not an automorphism.
\end{lemma}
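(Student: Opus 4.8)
The plan is to prove the two assertions of the lemma separately: that $\Delta$ fails to be an automorphism, and that it is nonetheless a local automorphism. The first is immediate from Lemma~\ref{filider}, since $\Delta$ is the operator~(11) with $\alpha=0\neq 1$. The whole substance of the proof therefore lies in exhibiting, for each fixed $x=\sum_{k=1}^n x_k e_k\in\mathcal{L}$, an automorphism $\Phi_x$ of $\mathcal{L}$ with $\Phi_x(x)=\Delta(x)=x+x_3e_n$. I would organize this around the single coordinate $x_2$, using a different witnessing automorphism when $x_2=0$ and when $x_2\neq 0$.

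The key auxiliary object is a one-parameter family of automorphisms adding only a central term. For a scalar $q\in\mathbb{C}$ define $\Psi_q(x)=x+q\,x_2 e_n$. I claim each $\Psi_q$ is an automorphism. Indeed, $e_n\in Z(\mathcal{L})$, so for all $x,y$ one has $[\Psi_q(x),\Psi_q(y)]=[x+q x_2 e_n,\,y+q y_2 e_n]=[x,y]$; on the other hand $[x,y]\in[\mathcal{L},\mathcal{L}]=\mathcal{L}^1=\mathrm{span}\{e_3,\dots,e_n\}$ has vanishing $e_2$-coordinate, whence $\Psi_q([x,y])=[x,y]$. Thus $\Psi_q([x,y])=[\Psi_q(x),\Psi_q(y)]$, and $\Psi_q$ is unipotent (its inverse is $x\mapsto x-q x_2 e_n$), hence an automorphism. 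Unlike the operator~(11), here no compensating $e_{n-1}$-term is needed, precisely because we add a multiple of the central element $e_n$ rather than of $e_{n-1}$.

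With this family in hand the case analysis is short. If $x_2\neq 0$, set $\Phi_x=\Psi_{q}$ with $q=x_3/x_2$; then $\Phi_x(x)=x+(x_3/x_2)x_2 e_n=x+x_3 e_n=\Delta(x)$. If $x_2=0$, take $\Phi_x$ to be the operator~(11) with $\alpha=1$, which is an automorphism by Lemma~\ref{filider}; since $x_2=0$ the $e_{n-1}$-term drops out and $\Phi_x(x)=x+x_2 e_{n-1}+x_3 e_n=x+x_3 e_n=\Delta(x)$. In either case $\Phi_x(x)=\Delta(x)$, so $\Delta$ is a local automorphism, completing the proof.

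The genuine point of the argument, and the step I would flag as the crux, is the recognition that two qualitatively different automorphisms are needed and that together they suffice. Neither family alone works: the automorphism $\Psi_q$ leaves $x$ fixed when $x_2=0$ and so cannot produce the required summand $x_3 e_n$ once $x_3\neq 0$, while the operator~(11) with $\alpha=1$ overshoots $\Delta(x)$ by the term $x_2 e_{n-1}$ whenever $x_2\neq 0$. The only verification requiring care is the claim that every $\Psi_q$ is an automorphism; this is exactly where the filiform relations enter, through $e_n\in Z(\mathcal{L})$ and $[\mathcal{L},\mathcal{L}]\subseteq\mathrm{span}\{e_3,\dots,e_n\}$, and it holds regardless of any additional structure constants of $\mathcal{L}$ beyond~(9).
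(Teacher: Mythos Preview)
Your proof is correct and follows essentially the same approach as the paper: the same auxiliary family $\Psi_q(x)=x+q\,x_2 e_n$ of automorphisms, the same case split on $x_2$, and the same witnessing automorphisms (namely $\Psi_{x_3/x_2}$ when $x_2\neq 0$ and the operator~(11) with $\alpha=1$ when $x_2=0$). Your justification that each $\Psi_q$ is an automorphism, via $e_n\in Z(\mathcal{L})$ and $[\mathcal{L},\mathcal{L}]\subseteq\mathrm{span}\{e_3,\dots,e_n\}$, is exactly the paper's argument as well.
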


\begin{proof} By Lemma~\ref{filider}, $\Delta$ is not an
automorphism.

Let us show that $\Delta$ is a local automorphism. Let
$\Psi_\beta$ be a linear operator on $\mathcal{L}$ defined by
\[
\Psi_\beta\left(u\right)=u+\beta u_2 e_n,\, u=\sum\limits_{k=1}^n
u_ke_k\in \mathcal{L}.
\]
It is clear that \(\Psi_\beta\) is bijective. Since
$\Psi_\beta|_{\mathcal{[\mathcal{L},\mathcal{L}]}}\equiv
\textrm{id}_{\mathcal{[\mathcal{L},\mathcal{L}]}}$ and \(e_n \in
Z(\mathcal{L}),\)  it follows that
$$
\Psi_\beta([x,y])=[x,y]=[\Psi_\beta(x), \Psi_\beta(y)]
$$
for all $x, y\in \mathcal{L}.$ So, $\Psi_\beta$ is an
automorphism.

Finally, for any $x=\sum\limits_{k=1}^n x_ke_k$ let us show an
automorphism that coincides  with \(\Delta\) at the point \(x.\)
Denote by $\Phi$ the automorphism  defined by (11) with
$\alpha=1.$

Case 1. $x_2=0.$ Then
\begin{eqnarray*}
\Phi(x) & = &  x+ x_3 e_n  =  \Delta(x).
\end{eqnarray*}

Case 2. $x_2\neq 0.$ Set $\beta=\frac{\textstyle x_3}{\textstyle
x_2}.$ Then
\begin{eqnarray*}
\Psi_\beta(x)  & = &  x+\beta x_2 e_n=x+x_3 e_n=\Delta(x).
\end{eqnarray*}
The proof is complete.
\end{proof}

\section{Acknowledgements}
The motivation to study the problems considered in this paper came
out from discussions made with Professor E.Zelmanov during the
Second USA--Uzbekistan Conference held at the Urgench State
University
 on August, 2017, which the authors gratefully acknowledge.
The authors  are indebted to Professor Mauro Costantini from the
University of Padova for valuable comments to the initial version
of the present paper.

\end{document}